\newtheorem{theorem}{Theorem}[section]
\newtheorem{proposition}{Proposition}[section]
\newtheorem{lemma}{Lemma}[section]
\newtheorem{corollary}{Corollary}[section]
\newtheorem{definition}{Definition}[section]
\newtheorem{remark}{Remark}[section]
\numberwithin{equation}{section} \numberwithin{theorem}{section}
\numberwithin{proposition}{section} \numberwithin{lemma}{section}
\numberwithin{corollary}{section}
\numberwithin{definition}{section} \numberwithin{remark}{section}
\newcommand{\HH}{\mathcal{H}} 
\newcommand{\R}{\mathbb{R}}
\newcommand{\diam}{{\rm diam}}
\newcommand{\com}{\color{black}}
\author{Camille Labourie and Antoine Lemenant}
\title{Regularity improvement for the minimizers of the two-dimensional Griffith energy}
\begin{document}

\maketitle

\begin{abstract}
    In this paper we prove that the singular set of connected minimizers of the
    planar Griffith functional has Hausdorff dimension strictly less then one,
    together with the higher integrability of the symetrized gradient.
\end{abstract}

\tableofcontents

\vspace{0.5cm}
\noindent {\bf Camille Labourie}, {University of Cyprus, Department of Mathematics \& Statistics, P.0. Box 20537, Nicosia, CY- 1678 Cyprus}
{\bf e-mail:} labourie.camille@ucy.ac.cy\\

\noindent{\bf Antoine Lemenant}, (corresponding author) {Universit\'e de Lorraine -- Nancy, CNRS, UMR 7502 Institut Elie Cartan de Lorraine, BP 70239
54506 Vandoeuvre-lès-Nancy}, {\bf e-mail:} {antoine.lemenant@univ-lorraine.fr}
\thispagestyle{empty}

\newpage

\section{Introduction}

In a planar elasticity setting, the Griffith energy is defined by
$$\mathcal{G}(u,K) := \int_{\Omega \setminus K} \! \mathbf A e(u):e(u) \dd{x} + \mathcal{H}^{1}(K),$$
where $\Omega \subset \R^2$ is a bounded open set which stands for the reference configuration of a linearized elastic body, and
$$\mathbf A \xi = \lambda ({\rm tr}\xi) I + 2 \mu \xi \quad \text{for all} \ \xi \in \mathbb M^{2 \times 2}_{\rm sym},$$
where $\lambda$ and $\mu$ are the Lam\'e coefficients satisfying $\mu > 0$ and $\lambda + \mu > 0$.
The energy functional is defined on pairs $(u,K)$ composed of a displacement $u : \Omega \setminus K \to \R^2$ and a $(N-1)$-dimensional crack-set $K \subset \Omega$. The notation $e(u) = (\nabla u + \nabla u^T)/2$ denotes the symmetric gradient of $u$.

The precise formulation of the Dirichlet problem is as follow.
We fix a bounded open set $\Omega'$ containing $\overline{\Omega}$ and a datum $\psi \in W^{1,\infty}(\Omega')$.
We define the admissible pairs as the elements of
$$\mathcal{A}(\Omega) := \set{K \subset {\overline \Omega} \ \text{is closed and} \ u \in LD(\Omega' \setminus K)},$$
where $LD$ is the space of functions of bounded Lebesgue deformation, i.e., functions $u \in W^{1,2}_{\mathrm{loc}}(\Omega' \setminus K)$ such that $e(u)\in L^2(\Omega' \setminus K)$.
We say that $(u,K) \in \mathcal{A}(\Omega)$ is a minimizer for the Griffith energy if it is a solution to the problem
$$\inf \Set{\int_{\Omega \setminus K} \! \mathbf A e(v):e(v) \dd{x} + \mathcal H^1(K) : \; (v,K) \in \mathcal{A}(\Omega), \, v = \psi \ \text{a.e.\ in } \ \Omega' \setminus\overline \Omega}.$$

A lot of attention has been given on the Griffith functional these last years (see  \cite{bil,CCF,CCI,CC2,CC,CFI,CFI2,FS}), and in particular it has been proved that a global minimizer $(u,K) \in \mathcal{A}(\Omega)$ (with a prescribed Dirichlet boundary condition) does exist and that the crack set $K$ is $\mathcal H^1$-rectifiable and locally Ahlfors-regular in $\Omega$.
The latter means that there exists $C_0 \geq 1$ (depending on $\mathbf{A}$) such that for all $x \in K$ and all $r > 0$ with $B(x,r) \subset \Omega$,
\begin{equation}\label{eq_AF}
    C_0^{-1} r \leq \HH^1(K \cap B(x,r)) \leq C_0 r.
\end{equation}

In \cite{bil} it was proved that any isolated connected component of the singular set $K$ of a Griffith minimizer is $C^{1,\alpha}$ a.e.\ It also applies to a connected minimizer $K$ (for e.g. minimizer with connected constraints).
In this paper we slightly improve the Hausdorff dimension of the singular set.
We also prove some higher integrability property on the symmetrized gradient.

\medskip

The main results of this paper are the following.

\medskip

\begin{theorem}
    Let $(u,K) \in \mathcal{A}(\Omega)$ be a minimizer of the Griffith energy with $K$ connected.
    Then
    \begin{enumerate}
        \item There exists $\alpha \in(0,1)$ and a relatively closed set $\Sigma \subset K \cap \Omega$ with
            $\dim_\mathcal{H}(\Sigma)<1$ such that $K \cap \Omega \setminus \Sigma$ is locally a $\mathcal C^{1,\alpha}$ curve.

        \item There exists $C \geq 1$ and $p > 1$ (depending on $\mathbf{A}$) such that for all  $x \in \Omega$ and $r > 0$ such that $B(x,r) \subset \Omega$,
            \begin{equation*}
                \int_{B(x,r/2)} \! \abs{e(u)}^{2p} \dd{x} \leq C r^{2-p}.
            \end{equation*}
    \end{enumerate}
\end{theorem}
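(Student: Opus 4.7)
The plan is to first establish part (2) via a Gehring-type argument, and then to combine the improved integrability of $e(u)$ with the $\C^{1,\alpha}$ a.e.\ regularity result of \cite{bil} to prove the dimension estimate (1) by a flatness decay and porosity argument.

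For (2), I would prove a reverse Hölder inequality for $|e(u)|$ at every scale $r \leq r_0$ and conclude by Gehring's lemma. The starting point is the energy upper bound $\int_{B_r \setminus K} |e(u)|^2 \dm x \leq C r$, which follows from the Ahlfors regularity of $K$ and comparison with a competitor that removes the ball $B_r$ entirely. One then uses the minimality of $u$ against an extension of $u$ across $B_{r/2}$ obtained by a Korn-type construction, together with a Korn--Poincaré inequality on the perforated domain $B_r \setminus K$ (available thanks to the connectedness and Ahlfors regularity of $K$). This produces an inequality of the form $\bigl(\tfrac{1}{|B_{r/2}|}\int_{B_{r/2}} |e(u)|^2 \dm x\bigr)^{1/2} \leq C \bigl(\tfrac{1}{|B_{r}|}\int_{B_r} |e(u)|^q \dm x\bigr)^{1/q}$ for some $q<2$. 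Gehring's lemma then upgrades the exponent to $2p$ for some $p>1$, and the stated integral bound follows after tracking the $r$-scaling.

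For (1), introduce a flatness quantity $\beta(x,r)$ measuring the normalized Hausdorff distance of $K \cap B(x,r)$ to the best-fitting line through $x$. The $\C^{1,\alpha}$ regularity theorem of \cite{bil} applies at points where $\beta(x,r)$ decays sufficiently as $r \to 0$. Let $\Sigma$ be the relatively closed set of remaining points in $K \cap \Omega$. The crucial step is a decay lemma: there exist $\theta \in (0,1)$ and $\varepsilon > 0$ such that if $\beta(x,r) + r^{-1}\int_{B_r \setminus K}|e(u)|^2 \dm x \leq \varepsilon$, then $\beta(x,\theta r) \leq \tfrac12 \beta(x,r)$. This is proved by a compactness/contradiction argument: a sequence of rescaled minimizers converges to a limit with flat crack and vanishing symmetrized gradient, which is rigid by a Korn-type rigidity statement. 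Points of $\Sigma$ must therefore carry a definite amount of normalized Dirichlet energy at every scale, and combining this lower density with the higher integrability from (2) forces $\Sigma$ to be porous in $K$, which yields $\dim_\HH(\Sigma) < 1$ after applying a porosity-dimension estimate inside the $1$-Ahlfors regular set $K$.

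The main obstacle is the decay step. Unlike the scalar Mumford--Shah case, where blow-up produces a harmonic function, here the limit is a vector-valued $LD$ function on a perforated half-plane and rigidity relies on quantitative Korn estimates that are delicate in the presence of a thin crack. The connectedness assumption is used both to rule out spurious rigid-body components in the limit and to ensure that flatness can be measured against a single global line; passing the higher integrability of (2) through the blow-up procedure while quantitatively controlling the energy loss in terms of $\beta$ is the technical heart of the proof.
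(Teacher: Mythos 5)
Your proposal runs in the opposite logical direction from the paper and, more importantly, has two genuine gaps, one in each part.

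\paragraph{Logical order.} The paper proves \emph{porosity of the singular set} first (Proposition~\ref{porosity}), and then \emph{derives} the higher integrability~(2) from it using the De~Philippis--Figalli/\cite{LM} covering lemma combined with interior and boundary gradient estimates for the Lam\'e system (Appendix~\ref{appendix_lame}). You propose the reverse: first prove~(2) by a Gehring-type argument and then feed it into a blow-up proof of~(1). The direction (2)~$\Rightarrow$~(1) is a legitimate strategy in the Mumford--Shah literature, but it then places the entire burden on the Gehring step, and that is where the first gap appears.

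\paragraph{Gap in the proof of (2).} The reverse H\"older inequality you postulate,
\begin{equation}
\Bigl(\fint_{B_{r/2}}|e(u)|^2\,\dm x\Bigr)^{1/2}\le C\Bigl(\fint_{B_r}|e(u)|^q\,\dm x\Bigr)^{1/q},\qquad q<2,
\end{equation}
would have to hold for \emph{every} ball $B_r\subset\Omega$, including balls in which $K$ has no known structure. The usual route to such an inequality is a Caccioppoli estimate, which requires subtracting a single rigid motion from $u$ on all of $B_r$; this fails when $K$ crosses $B_{r/2}$ because the jump of $u$ across $K$ makes the subtracted quantity $O(1)$, not small. Your fix --- ``a Korn--Poincar\'e inequality on the perforated domain $B_r\setminus K$, available thanks to connectedness and Ahlfors regularity of $K$'' --- is not an available tool: no such inequality holds uniformly for arbitrary compact connected Ahlfors-regular $K$, and establishing one is essentially equivalent to the structural information you are trying to derive. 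This is precisely the obstruction that led De~Philippis and Figalli to replace the Caccioppoli--Gehring scheme with the porosity scheme for free discontinuity problems, and it is the scheme the paper follows (Lemma~\ref{lem_technical}, fed by Proposition~\ref{porosity} and Lemma~\ref{lem_lame_estimate}).

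\paragraph{Gap in the proof of (1).} Your decay lemma --- ``if $\beta(x,r)+\omega_2(x,r)\le\varepsilon$ then $\beta(x,\theta r)\le\tfrac12\beta(x,r)$'' --- is \emph{false without an additional topological hypothesis}. The $\varepsilon$-regularity theorem of \cite{bil} that the paper invokes (Theorem~\ref{BILth}) requires, besides smallness of $\omega_2+\beta$, that $K$ \emph{separates} $D^\pm(x_0,r)$ in $\overline B(x_0,r)$ (Definition~\ref{separation}). A flat but non-separating $K$ inside $B_r$ (a straight segment with a gap, say) can have small $\beta$ and small $\omega_2$ yet fail to improve its flatness, because $u$ can ``go around'' the gap. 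Controlling when separation can be guaranteed is stated in the paper's introduction as ``one of the main issues of the present work'' and is resolved in Lemma~\ref{carleson1}, which uses the connectedness and uniform rectifiability of $K$ via a Bilateral Weak Geometric Lemma argument. Your blow-up sketch never engages with this, and a compactness argument will not produce the needed rigidity unless the separation is passed to the limit. A secondary issue: from the (corrected) decay lemma, what you learn at a point of $\Sigma$ is that $\omega_2(x,r)+\beta(x,r)>\varepsilon$ for all $r$, \emph{not} that $\omega_2(x,r)>\varepsilon/2$; on the set where $\beta$ is large and $\omega_2$ is small your energy-density argument gives nothing, so you still need a Carleson-type bound on the set where $\beta$ is large (this is $K$'s uniform rectifiability, used in the paper through \cite{DS4}). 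In short: missing separation makes the decay lemma false, and the density dichotomy is misstated.

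\paragraph{What the paper actually does that you would need.} The paper's key intermediate steps are: the Carleson packing of bad balls for $\beta$ (Lemma~\ref{carleson1}), the Carleson estimate for $\omega_p$, $p<2$, following David--Semmes (Proposition~\ref{carleson2}), the purely topological length lower bound $\HH^1(K\cap B)\ge 2r-3r\beta$ for connected $K$ (Lemma~\ref{topological}), which upgrades $\omega_p$-smallness to $\omega_2$-smallness (Corollary~\ref{corollary1}), and the $\varepsilon$-regularity theorem with the separation hypothesis (Theorem~\ref{BILth}). Together these give porosity (Proposition~\ref{porosity}); the dimension bound then follows from Rigot's argument, and the higher integrability from the covering lemma and the Lam\'e estimates. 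None of these is present in your sketch, and the two ingredients you substitute for them --- a reverse H\"older inequality on arbitrary balls and a separation-free flatness decay --- do not hold in the generality you would need.
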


The proof of our main theorem follows from standard technics that was already used in the scalar context of the Mumford-Shah functional, but adapted to the vectorial Griffith functional in a non trivial manner. In particular, for  (1) we follow the approach of David \cite{dMum} and Rigot \cite{rigot}, based on uniform rectifiability of the singular set and Carleson measure estimates.
The idea is to estimate to number of balls in which one can apply the $\varepsilon$-regularity theorem  contained in \cite{bil}. But the latter needs a topological separating property that one has to control in any initialized balls  which is one of the main issue of the present work (Lemma~\ref{lem_carleson1}). We also need to control the $2$-energy by a $p$-energy (Corollary \ref{corollary1}) which also uses a topological argument (Lemma~\ref{topological}). The proof of (2) is based on a strategy similar to what was first introduced by De Philippis and Figalli in \cite{DPF} and also used in \cite{LM}, which easily follows from the porosity of the singular set together with elliptic estimates. Since the needed elliptic estimates relatively to the Lam\'e system are not easy to find in the literature, we have developed an appendix containing the precise results. 

Let us stress  that the famous Cracktip function that arises as blow-up limits of   Mumford-Shah minimizers at the tip of the crack, has a vectorial analogue. This was the purpose of the work   in \cite{bcl1}. Since the vectorial Cracktip is homogeneous of degree 1/2 (see \cite[Theorem 6.4]{bcl1}), it is natural to conjecture that, akin to the standard Mumford-Shah functional, the integrability exponent of $|e(u)|$ should reach every $p<4$, as asked by De Giorgi for the Mumford-Shah functional. 

\section{Preliminaries}

\subsection*{Notation}

The $1$-dimensional Hausdorff measure is denoted by $\mathcal H^1$.
If $E$ is a measurable set, we will write $|E|$ its Lebesgue measure.
For $a$ and $b \in \R^2$, we write $a \cdot b = \sum_{i = 1}^2 a_i b_i$ the Euclidean scalar product, and we denote the norm by $|a| = \sqrt{a \cdot a}$.
The open (resp. closed) ball of center $x$ and radius $r$ is denoted by $B(x,r)$ (resp. $\overline B(x,r)$).

\medskip

We write $\mathbb M^{2 \times 2}$ for the set of real $2 \times 2$ matrices, and $\mathbb M^{2 \times 2}_{\rm sym}$ for that of all real symmetric $2 \times 2$ matrices.
Given two matrix $A,B \in \mathbb M^{2 \times 2}$, we recall the Frobenius inner product $A : B = \textrm{tr}(^t\!A B)$ and the corresponding norm $\abs{A} = \sqrt{\textrm{tr}(A^T A)}$.

\medskip

Given a weakly differentiable vector field $u$, the symmetrized gradient of $u$ is denoted by
$$e(u) := \frac{D u + D u^T}{2}.$$


\subsection*{The \texorpdfstring{$p$}{p}-normalized energy}

Let $(u,K) \in \mathcal{A}(\Omega)$.
Then for any $x_0 \in \Omega$ and $r > 0$ such that $B(x_0,r) \subset \Omega$, we define the {\it normalized elastic energy} of $u$ in $B(x_0,r)$ by
$$\omega_p(x_0,r) := r^{1-\frac{4}{p}} \left(\int_{B(x_0,r) \setminus K} | e(u)|^p \dd{x}\right)^{\frac{2}{p}}.$$

\subsection*{The flatness}
Let $K$ be a relatively closed subset of $\Omega$.
For any $x_0 \in K$ and $r > 0$ such that $B(x_0,r) \subset \Omega$, we define the {\it (bilateral) flatness} of $K$ in $B(x_0,r)$ by
$$\beta_K(x_0,r) := \frac{1}{r} \inf_{L} \max \Set{\sup_{y \in K\cap B(x_0,r)}{\rm dist }(y,L), \sup_{y \in L\cap B(x_0,r)}{\rm dist }(y,K)},$$
where $L$ belongs to the set of lines passing through $x_0$. When a minimizer $(u,K) \in \mathcal{A}(\Omega)$ is given, we write simply $\beta(x_0,r)$ for $\beta_K(x_0,r)$.

\begin{remark}
    {\rm The flatness $\beta_K(x_0,r)$ only depends on the set $K \cap B(x_0,2r)$. We have that for all $0 < t \leq r$,
        \begin{equation*}
            \beta_K(x_0,t) \leq \frac{r}{t}\beta_K(x_0,r),
        \end{equation*}
        and for $y_0 \in K \cap B(x_0,r)$ and $t > 0$ such that $B(y_0,t) \subset B(x_0,r)$,
        \begin{equation*}
            \beta_K(y_0,t) \leq \frac{2r}{t} \beta_K(x_0,r).
        \end{equation*}
    }
\end{remark}

In the sequel, we will consider the situation where $x_0 \in K$, $r > 0$ are such that $B(x_0,r) \subset \Omega$ and
\begin{equation*}
    \beta_K(x_0,r) \leq \varepsilon,
\end{equation*}
for $\varepsilon \in (0,1/2)$ small.
This implies in particular that $K\cap B(x_0,r)$ is contained in a narrow strip of thickness $\varepsilon r$ passing through the center of the ball.
Let $L(x_0,r)$ be a line passing through $x_0$ and satisfying
\begin{equation}\label{optimal}
    K \cap B(x_0,r) \subset \set{y \in B(x_0,r) | {\rm dist }(y,L)   \leq r \beta_K(x_0,r)}.
\end{equation}
We will often use a local basis (depending on $x_0$ and $r$) denoted by $(e_1,e_2)$, where $e_1$ is a tangent vector to the line $L(x_0,r)$, while $e_2$ is an orthogonal vector to $L(x_0,r)$.
The coordinates of a point $y$ in that basis will be denoted by $(y_1,y_2)$.

Provided \eqref{optimal} is satisfied with $\beta_K(x_0,r) \leq 1/2$, we can define two discs $D^+(x_0,r)$ and $D^{-}(x_0,r)$ of radius $r/4$ and such that $D^{\pm}(x_0,r) \subset B(x_0,r) \setminus K$.
Indeed, using the notation introduced above, setting $x_0^{\pm} := x_0\pm\frac{3}{4}r e_2$, we can check that $D^\pm(x_0,r) := B(x_0^\pm,r/4)$ satisfy the above requirements.
\medskip	

A property that will be fundamental in our analysis is the separation in a closed ball.

\begin{definition}\label{separation}
    Let $K$ be a relatively closed set of $\Omega$, $x_0 \in K$ and $r > 0$ be such that $B(x_0,r) \subset \Omega$ and $\beta_K(x_0,r) \leq 1/2$.
    We say that $K$ \emph{separates} $B(x_0,r)$ if the balls $D^\pm(x_0,r)$ are contained into two different connected components of $B(x_0,r) \setminus K$.
\end{definition}

The following lemma guarantees that when passing from a ball $B(x_0,r)$ to a smaller one $B(x_0,t)$, {and provided that $\beta_K(x,r)$ is relatively small,} the property of separating is preserved for $t$ varying in a range depending on {$\beta_K(x,r)$}.
\begin{lemma}\label{topological1}\cite[Lemma 3.1]{bil}
    Let $\tau \in (0,1/16)$, let $K \subset \R^2$ be a relatively closed subset of $\Omega$, let $x_0 \in K$, let $r > 0$ be such that $B(x_0,r) \subset \Omega$ and $\beta_K(x_0,r) \leq \tau$.
    If $K$ separates $B(x_0,r)$, then for all $t \in (16\tau r, r)$, we have $\beta_K(x_0,t) \leq 1/2$ and $K$ still separates $B(x_0,t)$.
\end{lemma}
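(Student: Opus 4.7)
My plan is to combine the flatness bound \eqref{brutest2} with a topological transfer of the scale-$r$ separation to scale $t$, routed through a pair of half-caps built from the fixed scale-$r$ line $L(x_0,r)$. This bypasses any direct angle comparison between $L(x_0,r)$ and the non-unique scale-$t$ line $L(x_0,t)$.

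First, \eqref{brutest2} directly yields, for $t \in (16\tau r, r)$,
$$\beta_K(x_0,t) \leq \frac{r}{t}\beta_K(x_0,r) \leq \frac{r\tau}{t} < \frac{1}{16},$$
so in particular $\beta_K(x_0,t) \leq 1/2$ and the discs $D^\pm(x_0,t)$ are well-defined.

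For the topological setup, write $L = L(x_0,r)$ with unit normal $e_2$, so that $K \cap \overline B(x_0,r)$ is contained in the strip $S := \{y : |(y-x_0)\cdot e_2| \leq \tau r\}$. The half-caps
$$V^\pm := \{y \in \overline B(x_0,t) : \pm (y-x_0)\cdot e_2 > \tau r\}$$
are convex (hence connected), non-empty since $t > 16\tau r > \tau r$, and disjoint from $K$. Each $V^\pm$ sits in the larger convex half-cap $U_r^\pm := \{y \in \overline B(x_0,r) : \pm(y-x_0)\cdot e_2 > \tau r\}$, which by a direct distance check contains $D^\pm(x_0,r)$. The separation hypothesis at scale $r$ then places $V^+$ and $V^-$ in distinct components of $\overline B(x_0,r)\setminus K$, and these components remain distinct in $\overline B(x_0,t)\setminus K$, since any path in the smaller ball is a fortiori a path in the larger one.

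Finally, I would transfer this to the discs at scale $t$. Let $e_2^t$ be the unit normal to $L(x_0,t)$, chosen so that $(e_2^t \cdot e_2) \geq 0$. The maximal $e_2$-coordinate over $D^+(x_0,t) = B(x_0 + \tfrac{3t}{4}e_2^t, t/4)$ is at least $\tfrac{t}{4} > \tau r$, so $D^+(x_0,t)$ meets $V^+$, and symmetrically $D^-(x_0,t)$ meets $V^-$. Being connected and disjoint from $K$, each disc lies inside the single component of $\overline B(x_0,t)\setminus K$ containing the half-cap it meets, and these two components are distinct, yielding the claimed separation. The main subtlety I thereby avoid is that an unfortunate choice of $L(x_0,t)$ could in principle make, say, $D^+(x_0,t)$ meet $V^-$ as well; but this would bridge $V^+$ and $V^-$ through the disc itself, contradicting their separation, so such choices are ruled out a posteriori.
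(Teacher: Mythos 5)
Your proof is correct. Note that the paper does not give its own argument for this lemma --- it is imported verbatim from \cite[Lemma 3.1]{bil} --- so there is no in-paper proof to compare against. The structure you chose is the natural one and closes cleanly: the flatness bound follows at once from \eqref{brutest2}, and for the separation you anchor the topological argument to the \emph{fixed} scale-$r$ line $L(x_0,r)$, via the half-caps $U_r^\pm \supset D^\pm(x_0,r)$ and their traces $V^\pm := U_r^\pm \cap \overline B(x_0,t)$, rather than trying to compare $L(x_0,r)$ with the non-canonical $L(x_0,t)$. That sidesteps the only genuine pitfall. The numeric step that makes the transfer to $D^\pm(x_0,t)$ work is that, with $e_2^t \cdot e_2 \geq 0$, the extremal $e_2$-coordinate over each disc has magnitude at least $t/4$, and $t/4 > 4\tau r > \tau r$ since $t > 16\tau r$, so each disc meets the half-cap of matching sign and is absorbed into that cap's component of $\overline B(x_0,t)\setminus K$; these two components are distinct because a path joining $V^+$ to $V^-$ inside $\overline B(x_0,t)\setminus K$ would also be one inside $\overline B(x_0,r)\setminus K$. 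Your closing remark about an unfortunate choice of $L(x_0,t)$ is not actually needed once each $D^\pm(x_0,t)$ has been placed in a definite component, but it is a correct sanity check.
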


\section{Local separation in many balls}

The purpose of this section is the following general result on compact connected sets which are locally Ahlfors-regular.

\begin{lemma}\label{lem_carleson1}
    Let $K \subset \overline{\Omega}$ be a compact connected set which is locally Ahlfors-regular in $\Omega$, i.e., there exists $C_0 \geq 1$ such that for all $x \in K$ and for all $r > 0$ with $B(x,r) \subset \Omega$,
    \begin{equation*}
        C_0^{-1} r \leq \HH^1(K \cap B(x,r)) \leq C_0 r.
    \end{equation*}
    Then for every $0 < \varepsilon \leq 1/2$, there exists $a \in (0,1/2)$ small enough (depending on $C_0$ and $\varepsilon$) such that the following holds. For all $x \in K$ and $r > 0$ with $B(x,r) \subset \Omega$, one can find $y \in K \cap B(x,r/2)$ and $t \in (ar,r/2)$ satisfying:
    \begin{equation}\label{eq_pair}
        \beta_K (y,t) \leq \varepsilon \ \text{and} \ K \ \text{separates} \ B(y,t) \ \text{in the sense of Definition~\ref{separation}}.
    \end{equation}
\end{lemma}

\begin{proof}
    The letter $C$ is a constant $\geq 1$ that depends on $C_0$ and whose value might increase from one line to another but a finite number of times.
    Let $x \in K$ and let $r > 0$ be such that $B(x,r) \subset \Omega$. It will be more convenient to work under the assumption that $B(x,10 r) \subset \Omega$ and $r \leq \mathrm{diam}(K) / 10$ and we are going to justify that we can make this assumption without loss of generality.
    First, we draw from the local Ahlfors-regularity that there exists a constant $C_1 \geq 1$ (depending on $C_0$) such $\mathrm{diam}(K) \geq C_1^{-1} r$.
    Let us consider the constant $\kappa := 10 C_1$, the radius $r_1 = \kappa^{-1} r$ and some $a \in (0,1/2)$.
    If we solve the problem in the ball $B(x,r_1)$, that is, if we find $y \in B(x, r_1/2)$ and $t \in (a r_1,r_1/2)$ such that (\ref{eq_pair}) holds true, then we have solved the problem in $B(x,r)$ as well because $y \in B(x,r/2)$ and $t \in (b r, r/2)$, where $b = a \kappa^{-1}$.
    This shows that it suffices to solve the problem in the ball $B(x,r_1)$ which satisfies $r_1 \leq \min(r/10, \mathrm{diam}(K)/10)$.
    From now on, we directly assume $B(x,10 r) \subset \Omega$ and $r \leq \mathrm{diam}(K)/10$.

    \vspace{0.5cm}
    \noindent\emph{Step 1. Find a smaller shifted ball with small flatness.}
    In the sequel we want to apply the results of \cite{DS4}, which works with sets of infinite diameter. This explains why we are going to need to slightly modify our set $K$ to fit in the definition of \cite{DS4}. Precisely, given an arbitrary line $L$ passing through $x$, one can check that the set
    \begin{equation*}
        E = (K \cap B(x,3r)) \cup \partial B(x,3r) \cup (L \setminus B(x,3r))
    \end{equation*}
    is connected and Ahlfors-regular in the exact sense of \cite[Definition~1.13]{DS4}, that is, $E$ is closed and there exists $C \geq 1$ (that depends on $C_0$ as usual) such that for all $y \in E$ and for all $t > 0$,
    \begin{equation*}
        C^{-1} t \leq \HH^1(E \cap B(y,t)) \leq C t.
    \end{equation*}
    As a consequence, it is contained in a (Ahlfors)-regular curve (see \cite[(1.63)]{DS4} and the discussion below) and thus is uniformly rectifiable with a constant $C \geq 1$ that depends on $C_0$ (\cite[Theorem~1.57 and Definition~1.65]{DS4}). In particular, it satisfies a geometric characterisation of uniform rectifiability called \emph{Bilateral Weak Geometric Lemma} (\cite[Definition~2.2 and Theorem~2.4]{DS4}).
    It means that for all $\varepsilon > 0$, there exists $C(\varepsilon) \geq 1$ (depending on $C_0$ and $\varepsilon$) such that for all $y \in E$ and all $t > 0$,
    \begin{equation*}
        \int_{z \in E \cap B(y,t)} \! \int_0^\rho \! \mathbf{1}_{\mathcal{C}(\varepsilon)}(z,s) \frac{\dd{s}}{s} \dd{\HH^1(z)} \leq C(\varepsilon) t,
    \end{equation*}
    where
    \begin{equation*}
        \mathcal{C}(\varepsilon) := \set{(z,s) | z \in E, \ s > 0, \ \text{and} \ \beta_E(z,s) > \varepsilon}.
    \end{equation*}
    We apply this property with $y := x$ and $\rho := r$,
    \begin{equation}\label{eq_carlesonE}
        \int_{z \in E \cap B(x,r)} \! \int_0^\rho \! \mathbf{1}_{\mathcal{C}(\varepsilon)}(z,s) \frac{\dd{s}}{s} \dd{\HH^1(z)} \leq C(\varepsilon) r,
    \end{equation}
    We observe that for all $z \in K \cap B(x,r)$ and for all $0 < s < r$, we have $B(z,2s) \subset B(x,3r)$ so $K \cap B(z,2s) = E \cap B(z,2s)$ and hence $\beta_K(z,s) = \beta_E(z,s)$. Therefore, (\ref{eq_carlesonE}) simplifies to
    \begin{equation}\label{eq_carleson1}
        \int_{z \in K \cap B(x,r)} \! \int_{0}^r \! \mathbf{1}_{\mathcal{B}(\varepsilon)}(z,s) \frac{\dd{s}}{s} \dd{\HH^1(z)} \leq C(\varepsilon) r,
    \end{equation}
    where
    \begin{equation*}
        \mathcal{B}(\varepsilon) := \Set{(z,s) | z \in K, \ s > 0, \ B(z,s) \subset \Omega \text{and} \ \beta_K(z,s) > \varepsilon}.
    \end{equation*}

    We fix $\varepsilon \in (0,1/10)$.
    We let $a \in (0,1/2)$ be a small parameters that will be fixed later.
    Assume by contradiction that for all $z \in K \cap B(x,r)$ and $s \in (ar,r)$, we have $\beta_K(z,s) > \varepsilon$.
    This means that for such pairs $(z,s)$, we have ${\bf 1}_{\mathcal{B}(\varepsilon)}(z,s) = 1$.
    Moreover we have by local Ahlfors-regularity $\HH^1(K \cap B(x,r)) \geq C_0^{-1} r$ so
    \begin{align*}
        \int_{z \in K\cap B(x,r)} \! \int_{0}^r \! {\bf 1}_{\mathcal{B}(\varepsilon)}(z,s) \frac{\dd{s}}{s} \dd{\HH^1(z)} &\geq \mathcal{H}^1(K\cap B(x,r)) \int_{ar}^r \! \frac{\dd{s}}{s}\\
                                                                                                                            &\geq \mathcal{H}^1(K\cap B(x,r)) \ln(\frac{1}{a})\\
                                                                                                                            &\geq C_0^{-1} r\ln(\frac{1}{a}).
    \end{align*}
    Using now \eqref{eq_carleson1}, we arrive at a contradiction, provided that $a$ is small enough (depending on $C_0$ and $\varepsilon$).
    Thus we have found $y \in B(x,r/2)$ and $t \in (ar ,r/2)$ such that $t \leq \mathrm{diam}(K)/10$ and
    $$\beta_K(y,t) \leq \varepsilon.$$

    \vspace{0.5cm}
\noindent\emph{Step 2. Conclusion.}
    We then need to find a shifted smaller ball again in which $K$ separates. For that purpose, we will use the fact that a compact connected set with finite length is arcwise connected (see \cite[Theorem 1.8]{DS4}).
    We choose a coordinate system such that $y = (0,0)$ and the line $L$ that realizes the infimum in the definition of $\beta_K(y,t)$ is the $x$ axis: $L = \R\times \set{0}$, in particular $\beta_K(0,t) \leq \varepsilon$ and
    \begin{equation*}
        K\cap B(0,t) \subset \set{(z_1,z_2) | \abs{z_2} \leq \varepsilon t.}
    \end{equation*}
    Since $t \leq \diam(K)/10$, there exists a point $z \in K \setminus B(0,t)$ and a curve $\Gamma \subset K$ from $0$ to $z$.
    This curve touches $\partial B(0,t)$ and we let $z' \in \partial B(0,t)$ be the point of $\partial B(0,t)$ which is reached by $\Gamma$ for the first time.
    Let $\Gamma' \subset \Gamma$ be the piece of curve from $0$ to $z'$. We see that $$\Gamma' \setminus \set{z} \subset K \cap B(0,t)$$ since $\Gamma'$ starts at $0$ and meets $\partial B(0,t)$ only at its extremity. Therefore, we have $$\Gamma' \subset \overline{B}(0,t) \cap \set{(z_1,z_2) | \abs{z_2} \leq \varepsilon t}.$$
    The point $z'$ must lie either on the arc $\partial B(0,t)\cap \set{(z_1,z_1) | z_1 > 0}$ or on the arc $\partial B(0,t)\cap \set{(z_1,z_1) | z_1 < 0}$.
    Let us assume that the first case occurs (for the second case we can argue similarily).
    The curve $\Gamma'$ stays inside $\overline{B}(0,t) \cap \set{(z_1,z_2) | \abs{z_2} \leq \varepsilon t}$, and runs from $0$ (the center of the ball) to $z'$ (on the boundary of $B(0,t)$) such that $z'_1 > 0$.
    We can therefore find a point $y' = (y'_1,y'_2) \in \Gamma'$ such that $y'_1 > 0$, $\abs{y'_2} \leq \varepsilon t$ and $t/4 \leq \abs{y'} \leq 3t/4$. 
    Moreover, $\Gamma'$ must separate $B(y',t/8)$ the two connected components of
    \begin{equation}\label{eq_strip0}
         B(y',t/8) \cap \set{(z_1,z_2) | \abs{z_2} > \varepsilon t}.
    \end{equation}
    and thus it separates $B(y',t/8)$ the two connected components of
    \begin{equation}\label{eq_strip}
         B(y',t/8) \cap \set{(z_1,z_2) | \abs{z_2 - y'_2} > 2\varepsilon t}.
    \end{equation}
    The strip in (\ref{eq_strip}) contains the strip in (\ref{eq_strip0}) so that it is centred on $y'$.
    As $K$ contains $\Gamma'$, then $K$ also separates $B(y',t/8)$.
    Moreover, we have $\beta_K(y',t/8) \leq 16 \beta_K(y,t) \leq 16 \varepsilon$ and one can replace $\varepsilon$ by $\varepsilon/16$ in the proof above to arrive exactly at $\beta_K(y',t/8) \leq \varepsilon$.
\end{proof}

\section{Carleson measure estimates on \texorpdfstring{$\omega_p$}{omega\_p}}

We define
$$\Delta := \set{(x,r) | x \in K,\ r > 0 \ \text{and} \ B(x,r) \subset \Omega}.$$
The purpose of this section is to state the following fact.

\begin{proposition}\label{carleson2}
    Let $(u,K) \in \mathcal{A}(\Omega)$ be a minimizer of the Griffith functional.
    For all $p \in [1,2)$, there exists $C_p \geq 1$ (depending on $p$ and $\bf A$) such that
    $$\int_{y \in K\cap B(x,r)} \! \int_{0<t<r} \! \omega_p(y,t) \frac{\dd{t}}{t} \dd{\HH^1(y)} \leq C_p r,$$
    for all $(x,2r) \in \Delta$.
\end{proposition}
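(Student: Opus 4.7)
The strategy combines the classical upper energy bound for Griffith minimizers with a Fubini argument that exploits the strict inequality $p<2$ in a critical way.

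First, by comparing the minimizer with the competitor $(\tilde u,\tilde K)$ where $\tilde K:=(K\setminus B(y,t))\cup\partial B(y,t)$ and $\tilde u:=u\,\mathbf{1}_{\Omega\setminus B(y,t)}$, one recovers the classical energy density bound
$$\int_{B(y,t)\setminus K}|e(u)|^2\,\dm z\leq C\,t\qquad\text{for all } y\in K,\ 0<t\leq r_0,\ B(y,t)\subset\Omega,$$
and consequently, by Jensen's inequality (valid since $p\leq 2$), the uniform bound $\omega_p(y,t)\leq C$.

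Next, observe that $\omega_p(y,t)^{p/2}=t^{p/2-2}\int_{B(y,t)\setminus K}|e(u)|^p\,\dm z$ is linear in $|e(u)|^p$. Applying Tonelli to exchange the order of integration and estimating the resulting integral over $y$ by a dyadic-annulus argument combined with the Ahlfors regularity of $K$ (the strict inequality $p<2$ ensures that the relevant geometric sum is dominated by its largest term), one obtains
$$\int_{K\cap B(x,r)}\int_0^r\omega_p(y,t)^{p/2}\,\frac{\dm t}{t}\,\dm\HH^1(y)\leq C_p\int_{B(x,2r)\setminus K}|e(u)(z)|^p\,\mathrm{dist}(z,K)^{p/2-1}\,\dm z.$$
The weighted integral on the right is then controlled by a dyadic decomposition of $B(x,2r)$ into tubular strips $S_k=\{2^{-k-1}r<\mathrm{dist}(\cdot,K)\leq 2^{-k}r\}$, using on each $S_k$ both the $L^2$ upper density $\int_{S_k}|e(u)|^2\,\dm z\leq Cr$ and the pointwise interior estimate $|e(u)(z)|\leq C\,\mathrm{dist}(z,K)^{-1/2}$ coming from interior Lam\'e regularity applied in $B(z,\mathrm{dist}(z,K)/2)\subset\Omega\setminus K$ together with the density bound. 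Balancing these inputs with the distance weight and summing over $k$ using $p<2$ yields $C_p\,r$ for the right-hand side.

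Finally, since $\omega_p\leq C$ uniformly and $p/2<1$, the pointwise interpolation $\omega_p\leq C^{1-p/2}\omega_p^{p/2}$ holds; integrating this and combining with the two preceding steps concludes the proof. The main technical obstacle is the dyadic-strip estimate: the integrability of the weight $\mathrm{dist}(\cdot,K)^{p/2-1}$ against $|e(u)|^p$ is borderline as $p\to 2$, and the interplay between the pointwise $d^{-1/2}$ bound and the $L^2$ density has to be balanced carefully so as to produce a constant $C_p$ depending on $p$ and blowing up only at $p=2$.
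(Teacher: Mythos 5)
Your global structure — the uniform bound $\omega_p\leq C$ from the energy density, the interpolation $\omega_p\leq C^{1-p/2}\,\omega_p^{p/2}$, and the Tonelli step turning the integral over $(y,t)$ into a weighted integral
$$\int_{B(x,2r)\setminus K}|e(u)(z)|^p\,\mathrm{dist}(z,K)^{p/2-1}\,\dm z$$
with a factor $C/(1-p/2)$ from the convergent dyadic sum in $y$ — is all correct, and this is indeed what makes the hypothesis $p<2$ enter. The paper itself only cites David's book (\cite[Section 23]{d}) for this proposition, so your reduction is a reasonable one.

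The gap is in the final dyadic-strip estimate, and it is a real one. On the strip $S_k=\{2^{-k-1}r<\mathrm{dist}(\cdot,K)\leq 2^{-k}r\}\cap B(x,2r)$, Ahlfors regularity gives $|S_k|\lesssim 2^{-k}r^2$, the pointwise bound gives $|e(u)|\lesssim (2^{-k}r)^{-1/2}$ on $S_k$, and the energy density gives $\int_{S_k}|e(u)|^2\lesssim r$. Work out the contribution $I_k=\int_{S_k}|e(u)|^p\,d^{p/2-1}$ with either input: using the $L^\infty$ bound, $I_k\lesssim (2^{-k}r)^{p/2-1}\cdot 2^{-k}r^2\cdot (2^{-k}r)^{-p/2}=r$; using H\"older against the $L^2$ density, $I_k\lesssim (2^{-k}r)^{p/2-1}\cdot r^{p/2}\cdot(2^{-k}r^2)^{1-p/2}=r$. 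In both cases the exponent of $2^{-k}$ cancels \emph{exactly} (this is precisely what makes the weight $d^{p/2-1}$ the natural one), so each strip contributes $\approx r$ and $\sum_k I_k$ diverges. There is no geometric decay, and the strict inequality $p<2$ does not rescue the sum: it is what made the Tonelli integral in $y$ converge, but it has no purchase on the strip sum. Combining the two inputs via H\"older does not help either: one gets $\int_{S_k}|e(u)|^p\,d^{p/2-1}\leq(\int_{S_k}|e(u)|^2)^{p/2}(\int_{S_k}d^{-1})^{1-p/2}$ and both factors are borderline ($\int_{S_k}d^{-1}\lesssim r$), so the product is again $\approx r$ per strip.

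In fact the inequality $\int_{B(x,2r)\setminus K}|e(u)|^p\,d^{p/2-1}\leq C_p\,r$ simply cannot be deduced from the two constraints you invoke. Take a Whitney decomposition of the tubular neighbourhood into balls $B_j$ of radius $\approx d_j:=\mathrm{dist}(z_j,K)$, and abstract the problem to weights $a_j=\int_{B_j}|e(u)|^2$ with $a_j\leq Cd_j$, $\sum_j a_j\leq Cr$, and $\approx 2^k$ cubes at scale $d_j\approx 2^{-k}r$. Then the quantity you need to control is $\sum_j d_j^{1-p/2}a_j^{p/2}$. Choosing $a_j=r/(N2^k)$ on the cubes at scales $k=0,\dots,N$ (which is consistent with all the above constraints, and even with the tree monotonicity $a_{\mathrm{parent}}\geq\sum a_{\mathrm{children}}$) gives $\sum_j d_j^{1-p/2}a_j^{p/2}\approx N^{1-p/2}r\to\infty$. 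So additional structure coming from minimality (and not merely the upper energy density and the lower Ahlfors bound) must enter at this point, and the argument as written does not supply it. You would need to either reconstruct David's actual mechanism from \cite[Section 23]{d}, or find another route that does not pass through the weighted estimate $\int|e(u)|^p\,d^{p/2-1}\lesssim r$.
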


\begin{proof}
    The proof was originally performed by David and Semmes in the scalar context of Mumford-Shah minimizers (see \cite[Section 23]{d}).
    It relies on the local Ahlfors-regulary of Griffith minimizers, that is, there exists $C_0 \geq 1$ (depending on $\bf A$) such that for all $(x,r) \in \Delta$,
    \begin{equation}\label{eq_AF2}
        \int_{B(x,r)} \! |e(u)|^2 \dd{x} + \mathcal{H}^1(K\cap B(x,r)) \leq C_0 r
    \end{equation}
    and
    \begin{equation*}
        \mathcal{H}^1(K\cap B(x,r)) \geq C_0^{-1} r.
    \end{equation*}
    The inequality (\ref{eq_AF2}) directly follows by taking
    \begin{equation*}
        u{\bf 1}_{\Omega \setminus B(x,r)} \quad \text{and} \quad (K \setminus B(x,r) )\cup \partial B(x,r)
    \end{equation*}
    as a competitor, and the ellipticity of $\mathbf A$.
    The proof in \cite[Section 23]{d} on Mumford-Shah minimizers can be followed verbatim so we prefer to omit the details and refer directly to \cite{d}.
\end{proof}


\section{Control of \texorpdfstring{$\omega_2$}{omega\_2} by \texorpdfstring{$\omega_p$}{omega\_p}}

The main $\varepsilon$-regularity theorem uses an assumption on the smallness of $\omega_2$.
Unfortunately, what we can really control in many balls (thanks to Proposition \ref{carleson2}) is $\omega_p$ for $p<2$, which is weaker.
This is why in this section we prove that $\omega_2$ can be estimated from $\omega_p$, for a minimizer.
This strategy was already used in \cite{d} and \cite{rigot} for the Mumford-Shah functional.
The adaptation for the Griffith energy is not straigthforward, but can be done by following a similar approach as the one already used in \cite[Section 4.1.]{bil}, generalized with $\omega_p$ instead of only $\omega_2$.
Some estimates from the book \cite{d} were also useful.

\begin{lemma}[Harmonic extension in a ball from an arc of circle]\label{extensionL}
    Let $p \in (1,2]$, $0 < \delta \leq 1/2$, $x_0 \in \mathbb R^2$, $r > 0$ and
    let $\mathscr C_\delta \subset\partial B(x_0,r)$
    be the arc of circle defined by $$\mathscr C_\delta := \set{(x_1,x_2) \in \partial B(x_0,r) \; : \; (x-x_0) \cdot {\bf e}_2 > \delta r}.$$
    Then, there exists a constant $C \geq 1$ (depending on $p$) such that every function $u \in W^{1,p}(\mathscr C_\delta;\R^2)$ extends to a function $g \in W^{1,2}(B(x_0,r);\R^2)$ with $g = u$ on $\mathscr C_\delta$ and
    $$\int_{B(x_0,r)} \! |\nabla g|^2 \dd{x} \leq C r^{2-\frac{2}{p}} \left(\int_{\mathscr C_\delta} \! |\partial_\tau u|^p \, d\mathcal{H}^1\right)^{\frac{2}{p}}.$$
\end{lemma}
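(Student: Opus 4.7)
The plan is to reduce to $r = 1$, $x_0 = 0$ by the dilation $y = (x - x_0)/r$: the factor $r^{2 - 2/p}$ is precisely the scaling mismatch between the Dirichlet energy (scale-invariant) and the quantity $\bigl(\int |\partial_\tau u|^p\, d\mathcal{H}^1\bigr)^{2/p}$, and since the two scalar components of $u$ are uncoupled we may work componentwise on a scalar function $u$. Write $B = B(0,1)$ and $\mathscr{C} = \mathscr{C}_\delta$.

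First I would extend $u$ from $\mathscr{C}$ to the full circle $\partial B$. Since $p > 1$, the 1D Sobolev embedding $W^{1,p}(\mathscr{C}) \hookrightarrow C^0(\overline{\mathscr{C}})$ provides continuous values at the endpoints. After subtracting $\bar u := \fint_{\mathscr{C}} u \, d\mathcal{H}^1$ (a constant which will be added back to $g$ and does not affect $\nabla g$) we may assume $u$ has zero mean on $\mathscr{C}$, and by the fundamental theorem of calculus combined with H\"older
\[
\|u\|_{L^\infty(\mathscr{C})} \leq \|\partial_\tau u\|_{L^1(\mathscr{C})} \leq C \|\partial_\tau u\|_{L^p(\mathscr{C})}.
\]
The hypothesis $\delta \leq 1/2$ keeps the complementary arc $\partial B \setminus \mathscr{C}$ of length at least $\pi$, so extending $u$ affinely in arc length between the two endpoint values yields $\tilde u \in W^{1,p}(\partial B)$ satisfying
\[
\|\partial_\tau \tilde u\|_{L^p(\partial B)} + \|\tilde u\|_{L^\infty(\partial B)} \leq C \|\partial_\tau u\|_{L^p(\mathscr{C})}.
\]

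Next I would take $g$ to be the Poisson extension of $\tilde u$ to $B$ shifted by $\bar u$. Writing $\tilde u = \sum_k \hat u_k e^{ik\phi}$ in Fourier series, the harmonic extension reads $g - \bar u = \sum_k \hat u_k r^{|k|} e^{ik\phi}$, and a direct orthogonality computation in polar coordinates gives the classical identity
\[
\int_B |\nabla g|^2 \, dx = 2\pi \sum_{k \ne 0} |k|\, |\hat u_k|^2 = 2\pi\, [\tilde u]^2_{\dot H^{1/2}(\partial B)}.
\]
The heart of the proof is to dominate this $\dot H^{1/2}$-seminorm by $\|\partial_\tau \tilde u\|_{L^p}^2$. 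For this I would invoke the Gagliardo--Nirenberg interpolation inequality on the one-dimensional circle, which follows from the interpolation identity $(L^{p'}, \dot W^{1,p})_{1/2} = \dot H^{1/2}$ with $p' = p/(p-1)$:
\[
[\tilde u]^2_{\dot H^{1/2}(\partial B)} \leq C\, \|\tilde u - \overline{\tilde u}\|_{L^{p'}(\partial B)}\, \|\partial_\tau \tilde u\|_{L^p(\partial B)} \leq C\, \|\partial_\tau u\|_{L^p(\mathscr{C})}^2,
\]
where the last step uses $\|\tilde u - \overline{\tilde u}\|_{L^{p'}} \leq C \|\tilde u\|_{L^\infty}$ on the bounded circle together with the previous $L^\infty$ bound.

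The main obstacle is the apparent loss between the $L^2$-based Dirichlet energy on the left and the weaker $L^p$-norm of $\partial_\tau u$ on the right: for $p < 2$, on a bounded one-dimensional domain, $L^2$ controls $L^p$ but not conversely, so a naive Young/Hausdorff--Young estimate of the Poisson convolution would only yield an $L^p$ estimate of $\nabla g$. This is precisely what the Gagliardo--Nirenberg interpolation bridges, by combining the Poincar\'e-type $L^\infty$ control of a mean-zero $W^{1,p}$ function (available only because $p > 1$) with the $L^p$ derivative bound. The subsidiary technical point, that the extension to the full circle must not spoil the $L^p$ bound on $\partial_\tau$, is guaranteed by $\delta \leq 1/2$ which keeps the length of the complementary arc bounded below.
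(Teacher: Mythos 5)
Your argument is correct, and its high-level structure matches the paper's: scale to the unit ball, extend $u$ from the arc $\mathscr{C}_\delta$ to the full circle in a way that preserves the $W^{1,p}$ bound on the tangential derivative, take the harmonic (Poisson) extension to the disc, and bound its Dirichlet energy by the $L^p$ norm of the boundary tangential derivative. The difference is in the details. Where the paper simply cites \cite[Lemma 22.16]{d} for the estimate $\int_B |\nabla g|^2 \le C (\int_{\partial B} |\partial_\tau \tilde u|^p)^{2/p}$, you reprove it from scratch via the Fourier identity $\int_B |\nabla g|^2 = 2\pi \sum_k |k| |\hat u_k|^2$ and the interpolation bound $[\tilde u]^2_{\dot H^{1/2}} \le C \|\tilde u - \overline{\tilde u}\|_{L^{p'}} \|\partial_\tau \tilde u\|_{L^p}$; this is sound, and is most transparently justified not by abstract interpolation-space identities but by the duality $\sum_k |k||\hat u_k|^2 = \langle \tilde u - \overline{\tilde u}, H \partial_\tau \tilde u\rangle$ with $H$ the Hilbert transform, H\"older, and $L^p$-boundedness of $H$ for $1<p<\infty$ — your appeal to ``$(L^{p'},\dot W^{1,p})_{1/2}=\dot H^{1/2}$'' is a slightly loose way of expressing this. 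You also extend to the full circle by affine interpolation across the complementary arc (using $\delta\le 1/2$ to keep that arc long and $p>1$ to get endpoint values via the $L^\infty$ embedding), whereas the paper uses a bilipschitz reparametrization to the half-circle followed by even reflection; both accomplish the same thing with constants independent of $\delta$. The trade-off is that your version is longer but self-contained, whereas the paper's is shorter at the cost of an external reference.
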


\begin{proof}
    Let $\Phi : \mathscr C_\delta\to \mathscr C_0$ be a bijective and bilipschitz mapping from $\mathscr C_\delta$ to $\mathscr C_0$. As $\delta \leq 1/2$, we can can request that the biLipschitz constant of $\Phi$ is universal.
    Since $u\circ \Phi^{-1} \in W^{1,p}(\mathscr C_0;\R^2)$, we can define the extension by reflection $\tilde u \in W^{1,p}(\partial B(x_0,r); \R^2)$ on the whole circle $\partial B(x_0,r)$, that satisfies
    $$\int_{\partial B(x_0,r)}\! |\partial_\tau \tilde u|^p d\mathcal{H}^1 \leq C \int_{\mathscr C_\delta} \! |\partial_\tau u|^p\, d\mathcal{H}^1,$$
    where $C \geq 1$ is a universal constant.
    We next define $g$ as the harmonic extension of $\tilde u$ in $B(x_0,r)$. Using \cite[Lemma 22.16]{d}, we obtain
    $$\int_{B(x_0,r)} \! |\nabla g|^2 \dd{x} \leq Cr^{2-\frac{2}{p}} \left(\int_{\partial B(x_0,r)}\! |\partial_\tau \tilde u|^p\, d\mathcal{H}^1 \right)^{\frac{2}{p}} \leq C r^{2-\frac{2}{p}} \left(\int_{\mathscr C_\delta}\! |\partial_\tau u|^2 \, d\mathcal{H}^1\right)^{\frac{2}{p}},$$
    which completes the proof.
\end{proof}

\begin{lemma}\label{extension}
    Let $p \in (1/2]$, let $(u,K) \in \mathcal A(\Omega)$ be a minimizer of the Griffith functional. Let $x_0 \in K$ and $r > 0$ be such that $B(x_0,r) \subset \Omega$ and $\beta(x_0,r) \leq 1/2$.
    Let $S$ be the strip defined by
    $$S := \set{y \in B(x_0,r) | {\rm dist}(y,L) \leq r\beta(x_0,r)},$$
    where $L$ is the line passing through $x_0$ which achieves the infimum in $\beta_K(x_0,r)$.
    Then there exists a constant $C \geq 1$ (depending on $p$), a radius $\rho \in (r/2,r)$ and two functions $v^\pm \in W^{1,2}(B(x_0,\rho);\R^2)$ such that $v^\pm = u \ \text{on} \ \mathscr C^\pm$ and
    $$\int_{B(x_0,\rho)}\! |e( v^\pm)|^2 \dd{x} \leq Cr^{2-\frac{4}{p}} \left(\int_{B(x_0,r) \! \setminus K}|e(u)|^p \dd{x}\right)^{\frac{2}{p}},$$
    where $\mathscr C^\pm$ are the connected components of $\partial B(x_0,\rho) \setminus S$
\end{lemma}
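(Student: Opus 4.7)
The plan is to split $B(x_0,r) \setminus S$ into its two ``crescent'' components $U^{+}$, $U^{-}$, subtract a rigid motion on each (chosen via Korn's inequality), use Fubini in polar coordinates to pick a radius $\rho \in (r/2,r)$ at which the tangential trace on each arc has controlled $L^{p}$-norm, extend the trace harmonically via Lemma~\ref{extensionL}, and finally add back the rigid motion. Since a rigid motion has vanishing symmetrised gradient, the resulting $v^{\pm}$ will automatically satisfy the required $\abs{e(v^{\pm})}^{2}$-estimate.

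Concretely, since $K \cap \overline B(x_0,r) \subset S$, the two connected components $U^{\pm}$ of $B(x_0,r) \setminus S$ are Lipschitz subdomains of $B(x_0,r) \setminus K$. On each, $u \in LD(U^{\pm})$ and $e(u) \in L^2 \hookrightarrow L^p$. Applying the $L^p$-Korn inequality on $U^{\pm}$ separately provides rigid motions $m^{\pm}$ so that $\tilde u^{\pm} := u - m^{\pm} \in W^{1,p}(U^{\pm};\R^2)$ with
$$\int_{U^{\pm}} \! \abs{\nabla \tilde u^{\pm}}^p \dm x \leq C \int_{U^{\pm}} \! \abs{e(u)}^p \dm x.$$
Fubini in polar coordinates around $x_0$ then delivers some $\rho \in (r/2,r)$ with
$$\sum_{\pm} \int_{\mathscr C^{\pm}} \! \abs{\partial_\tau \tilde u^{\pm}}^p \dm \HH^1 \leq \frac{C}{r} \int_{B(x_0,r) \setminus K} \! \abs{e(u)}^p \dm x.$$

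For this radius, the traces $\tilde u^{\pm}|_{\mathscr C^{\pm}}$ belong to $W^{1,p}(\mathscr C^{\pm};\R^2)$, and Lemma~\ref{extensionL} yields harmonic extensions $g^{\pm} \in W^{1,2}(B(x_0,\rho);\R^2)$ with $g^{\pm} = \tilde u^{\pm}$ on $\mathscr C^{\pm}$ and
$$\int_{B(x_0,\rho)} \! \abs{\nabla g^{\pm}}^2 \dm x \leq C r^{2 - 2/p} \Bigl( \int_{\mathscr C^{\pm}} \! \abs{\partial_\tau \tilde u^{\pm}}^p \dm \HH^1 \Bigr)^{2/p} \leq C r^{2 - 4/p} \Bigl( \int_{B(x_0,r) \setminus K} \! \abs{e(u)}^p \dm x \Bigr)^{2/p}.$$
Setting $v^{\pm} := g^{\pm} + m^{\pm}$ gives $v^{\pm} = u$ on $\mathscr C^{\pm}$ and $\abs{e(v^{\pm})}^{2} = \abs{e(g^{\pm})}^{2} \leq \abs{\nabla g^{\pm}}^{2}$, which closes the estimate.

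The main obstacle we anticipate is securing uniform constants: both Korn's inequality on the crescents $U^{\pm}$ and the harmonic extension of Lemma~\ref{extensionL} must have constants independent of $\beta(x_0,r) \in [0,1/2]$. For Korn this requires checking that the $U^{\pm}$ are uniformly Lipschitz as $\beta$ varies, and for Lemma~\ref{extensionL} the arc parameter $\delta = r\beta/\rho$ must remain bounded away from $1$; the latter may force either a mildly tighter flatness assumption or a $\beta$-independent bilipschitz flattening of each arc before extending. Every other step (Fubini, trace of $W^{1,p}$, harmonic extension) parallels the scalar scheme in \cite[Section~4.1]{bil} and \cite{d}.
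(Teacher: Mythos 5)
Your proposal follows the paper's proof step for step: decompose $B(x_0,r)\setminus S$ into the two Lipschitz crescents, subtract a rigid motion via the $L^p$-Korn inequality on each, select a good radius $\rho\in(r/2,r)$ by Fubini in polar coordinates, extend the traces harmonically with Lemma~\ref{extensionL}, and add the rigid motions back so that $e(v^\pm)=e(g^\pm)$. The uniformity concerns you flag (Korn constants on the crescents, $\delta$ bounded away from $1$ in Lemma~\ref{extensionL}) are exactly the points the paper addresses by noting the crescents are uniformly Lipschitz over $\beta\leq 1/2$.
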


\begin{proof}
    Let $A^\pm$ be the connected components of $B(x_0,r) \setminus S$.
    Since $K\cap A^\pm = \emptyset$, by Korn inequality there exists two skew-symmetric matrices $R^\pm$ such that the functions $x\mapsto u(x)-R^\pm x$ belong to $W^{1,p}(A^\pm;\R^2)$ and
    $$\int_{A^\pm} \! |\nabla u-R^\pm|^p \dd{x} \leq C \int_{A^\pm} \! |e(u)|^p \dd{x},$$
    where the constant $C \geq 1$ is universal since the domains $A^\pm$ are all uniformly Lipschitz for all possible values of $\beta(x_0,r) \leq 1/2$.
    Using the change of variables in polar coordinates, we infer that
    $$\int_{A^\pm} \! |\nabla u-R^\pm|^p \dd{x} = \int_{0}^{r} \! \left(\int_{\partial B(x_0,\rho)\cap A^\pm} \! |\nabla u-R^\pm|^p \, d\mathcal{H}^1\right) d\rho$$
    which allows us to choose a radius $\rho \in (r/2,r)$ satisfying
    \begin{multline*}
        \int_{\partial B(x_0,\rho)\cap A^+} \! |\nabla u-R^+|^p\,d\mathcal{H}^1 + \int_{\partial B(x_0,\rho)\cap A^-} \! |\nabla u-R^-|^p\,d\mathcal{H}^1\\
        \leq \frac{2}{r} \int_{A^+} \! |\nabla u-R^+|^p \dd{x} + \frac{2}{r} \int_{A^-} \! |\nabla u-R^-|^p \dd{x} \leq \frac{C}{r} \int_{B(x_0,r) \setminus K} \! |e(u)|^p \dd{x}.
    \end{multline*}
    Setting $\mathscr C^\pm := \partial A^\pm \cap \partial B(x_0,\rho)$, in view of Lemma~\ref {extensionL} applied to the functions $u^\pm:x \mapsto u(x)-R^\pm x$, which belong to
    $W^{1,p}(\mathscr C^\pm;\R^2)$ since they are regular, for {\com $\delta = \beta(x_0,r)$} we get two functions $g^\pm \in W^{1,2}(B(x_0,\rho);\R^2)$ satisfying $g^\pm(x) = u(x)-R^\pm x$ for $\mathcal H^1$-a.e.\ $x \in \mathscr C^\pm$ and
    \begin{align*}
        \int_{B(x_0,\rho)}\! |\nabla g^\pm|^2 \dd{x} &\leq C\rho^{2-\frac{2}{p}} \left(\int_{\mathscr C^\pm}\! |\partial_\tau u^\pm|^p \,d\mathcal H^1\right)^{\frac{2}{p}}\\
                                                     &\leq Cr^{2-\frac{4}{p}} \left(\int_{B(x_0,r) \setminus K}\! |e(u)|^p \dd{x}\right)^{\frac{2}{p}}.
    \end{align*}
    Finally, the functions $x \mapsto v^\pm(x) := g^\pm(x) + R^\pm x$ satisfy the required properties.
\end{proof}

Using the competitor above, we can obtain the following.

\begin{proposition}\label{prop1}
    Let $p \in (1,2]$, let $(u,K) \in \mathcal A(\Omega)$ be a minimizer of the Griffith functional. Let $x_0 \in K$ and $r > 0$ be such that $B(x_0,r) \subset \Omega$ and $\beta(x_0,r) \leq 1/2$.
    Then there exists a constant $C \geq 1$ (depending on $p$) and a radius $\rho \in (r/2,r)$ such that
    $$\int_{B(x_0,\rho) \setminus K} \! \mathbf Ae(u):e(u) \dd{x} + \mathcal{H}^1(K\cap B(x_0,\rho)) \leq 2\rho + C\rho\big(\omega_p(x_0,r) + \beta(x_0,r)\big).$$
\end{proposition}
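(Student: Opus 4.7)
The plan is to use the extensions $v^\pm$ furnished by Lemma~\ref{extension} to build a valid competitor $(v,K')$ for $(u,K)$ in the ball $B(x_0,\rho)$, where $\rho \in (r/2,r)$ is the radius also supplied by that lemma.

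For the competitor crack, I take
$$K' := \bigl(K \setminus B(x_0,\rho)\bigr) \cup \bigl(L \cap \overline B(x_0,\rho)\bigr) \cup \bigl(S \cap \partial B(x_0,\rho)\bigr),$$
where $L$ is the optimal line for $\beta(x_0,r)$ and $S$ the strip from Lemma~\ref{extension}. Since $L$ passes through the center $x_0$, the chord $L \cap \overline B(x_0,\rho)$ is a diameter that splits $B(x_0,\rho)$ into two half-disks, and the two small circular arcs $S \cap \partial B(x_0,\rho)$ seal off the pieces of $\partial B(x_0,\rho)$ that fall inside the strip. For the displacement, I set $v := u$ in $\Omega' \setminus \overline B(x_0,\rho)$, $v := v^+$ in the half-disk containing $A^+$, and $v := v^-$ in the other one. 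The condition $v^\pm = u$ on $\mathscr C^\pm$ (the outer arcs lying outside $S$) guarantees that the traces of $v$ coming from inside and outside $B(x_0,\rho)$ agree on $\partial B(x_0,\rho) \setminus K'$, so that $(v,K')$ is an admissible pair.

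Comparing $(u,K)$ with $(v,K')$ via the minimality property, contributions from $\Omega \setminus \overline B(x_0,\rho)$ cancel and one is left with
$$\int_{B(x_0,\rho) \setminus K} \! \mathbf A e(u):e(u) \, dx + \mathcal H^1(K \cap B(x_0,\rho)) \leq \int_{B(x_0,\rho)} \! \mathbf A e(v):e(v) \, dx + \mathcal H^1(K' \cap \overline B(x_0,\rho)).$$
The elastic term on the right is bounded, by the upper ellipticity of $\mathbf A$ and the energy estimate of Lemma~\ref{extension}, by
$$C \int_{B(x_0,\rho)} \bigl(|e(v^+)|^2 + |e(v^-)|^2\bigr) \, dx \leq C r^{2-\frac{4}{p}} \Bigl(\int_{B(x_0,r) \setminus K} \! |e(u)|^p \, dx\Bigr)^{\frac{2}{p}} = C r \, \omega_p(x_0,r) \leq C\rho \, \omega_p(x_0,r),$$
using $\rho \geq r/2$. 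For the length term, $\mathcal H^1(L \cap \overline B(x_0,\rho)) = 2\rho$, while $\mathcal H^1(S \cap \partial B(x_0,\rho)) \leq C r \beta(x_0,r) \leq C\rho \, \beta(x_0,r)$ by elementary trigonometry (each of the two arcs inside the strip has length comparable to $r\beta(x_0,r)$). Summing these contributions yields the announced inequality.

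The main delicacy lies in the admissibility check for the competitor: one must verify that $K'$ is closed (which holds since each of its three pieces is closed in $\overline\Omega$, and $K \cap \partial B(x_0,\rho) \subset S$ by the flatness hypothesis), that $\overline B(x_0,\rho) \setminus K'$ really does split into two open components hosting $v^+$ and $v^-$ respectively, and that no additional jump of $v$ is introduced on $\partial B(x_0,\rho) \setminus K'$. Once these topological and trace-matching points are settled, the final estimate is just the arithmetic sum of the two bounds above.
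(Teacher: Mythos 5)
Your proof is correct and follows essentially the same route as the paper: you construct the identical competitor $(v,K')$ with the diameter $L\cap \overline B(x_0,\rho)$, the two boundary arcs (your $S\cap\partial B(x_0,\rho)$ is the paper's ``wall'' set $Z$), and $v^\pm$ patched on the half-disks, and you estimate the elastic energy via Lemma~\ref{extension} and the crack length via $2\rho + O(r\beta)$ exactly as the paper does. Your added remarks on admissibility (closedness of $K'$, trace matching on $\mathscr C^\pm$) are correct and make explicit what the paper leaves implicit.
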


\begin{proof}
    We keep using the same notation than that used in the proof of Lemma~\ref {extension}.
    Let $\rho \in (r/2,r)$ and $v^{\pm} \in W^{1,2}(B(x_0,\rho);\R^2)$ be given by Lemma~\ref {extension}.
    We now construct a competitor in $B(x_0,\rho)$ as follows.
    First, we consider a ``wall'' set $Z \subset \partial B(x_0,\rho)$ defined by
    $$Z := \set{y \in \partial B(x_0,\rho) | {\rm dist}(y,L(x_0,r)) \leq r \beta(x_0,r)}.$$
    Note that $K\cap \partial B(x_0,\rho) \subset Z$,
    $$\partial B(x_0,\rho) = [\partial A^+\cap \partial B(x_0,\rho)]\cup [\partial A^-\cap \partial B(x_0,\rho)] \cup Z = \mathscr C^+\cup \mathscr C^- \cup Z,$$
    and that
    $$\mathcal{H}^1(Z) = 4\rho \arcsin\left( \frac{r\beta(x_0,r)}{\rho} \right) \leq 2 \pi r \beta(x_0,r).$$

    We are now ready to define the competitor $(v,K')$ by setting
    $$K' := \big[K \setminus B(x_0,\rho ) \big]\cup Z \cup \big[L(x_0,r)\cap B(x_0,\rho)\big],$$
    and, denoting by $V^\pm$ the connected components of $B(x_0,\rho) \setminus L(x_0,r)$ which intersect $A^\pm$,
    $$
    v :=
    \begin{cases}
        {v^\pm} & \text{in} \ V^\pm \\
        u       & \text{otherwise.}
    \end{cases}
    $$
    Since $\mathcal{H}^1(K'\cap \overline B(x_0,\rho)) \leq 2\rho + 2 \pi r \beta(x_0,r)$, we deduce that
    \begin{align*}
        \begin{split}
                     & \int_{B(x_0,\rho) \setminus K} \! \mathbf Ae(u):e(u) \dd{x} +\mathcal{H}^1(K \cap \overline B(x_0,\rho))                   \\
                     & \leq \int_{B(x_0,\rho) \setminus K} \! \mathbf Ae(v):e(v) \dd{x} + \mathcal{H}^1(K' \cap \overline B(x_0,\rho))
        \end{split} \\
                     & \leq Cr^{2-\frac{4}{p}} \left(\int_{B(x_0,r) \setminus K} \! |e(u)|^p \dd{x}\right)^{\frac{2}{p}} + \rho(2 + C\beta(x_0,r)) \\
                     & \leq 2\rho + C \rho\big(\omega_p(x_0,r) + \beta(x_0,r)\big),
    \end{align*}
    and the proposition follows.
\end{proof}

The next Lemma is of purely topological nature.

\begin{lemma}\label{topological}
    Let $K \subset \R^2$ be a compact connected set with $\mathcal{H}^1(K) < +\infty$.
    For all $x \in K$ and $r \in (0,\diam(K)/2)$, we have
    $$\mathcal{H}^1(K\cap B(x,r)) \geq 2r - 4r\beta_K(x,r).$$
\end{lemma}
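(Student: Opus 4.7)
The plan is to use the $1$-Lipschitz projection $\pi_L$ onto a line $L$ realizing $\beta := \beta_K(x, r)$ together with the arcwise connectedness of $K$. Choose coordinates so that $L$ is the first axis and $x = 0$: by the definition of flatness, $K \cap \overline{B}(x, r)$ lies in the strip $S := \{|y_2| \leq \beta r\}$, and each point $(s, 0) \in L \cap \overline{B}(x, r)$ lies within $\beta r$ of $K$. In particular, there exist $q^\pm \in K$ with $|q^\pm - (\pm r, 0)| \leq \beta r$. Since $K$ is compact, connected with finite $\mathcal{H}^1$, it is arcwise connected; pick an arc $\gamma \subset K$ from $q^-$ to $q^+$, so by the intermediate value theorem applied to $\pi_L \circ \gamma$ one has $\pi_L(\gamma) \supset [-r + \beta r, r - \beta r]$.

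Since $\pi_L$ is $1$-Lipschitz, $\mathcal{H}^1(K \cap \overline{B}(x, r)) \geq \mathcal{H}^1(\pi_L(K \cap \overline{B}(x, r)))$, so it suffices to lower-bound this projection by $2r - 3\beta r$. The key geometric observation is that any point of $K \cap \partial B(x, r)$ belongs to $S$, so its first coordinate satisfies $|y_1| \geq r\sqrt{1 - \beta^2} \geq r - \beta r$ (using $\beta \leq 1/2$). Thus all exits and re-entries of $\gamma$ through $\partial B(x, r)$ happen at two small arcs of $\partial B \cap S$ near $p^\pm := (\pm r, 0)$, whose $L$-projections lie in $[-r, -r\sqrt{1-\beta^2}] \cup [r\sqrt{1-\beta^2}, r]$, a set of total length at most $\beta r$.

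Replacing each excursion of $\gamma$ outside $\overline{B}(x,r)$ by the straight chord between its entry and exit points (which lies in $\overline{B}$ by convexity) produces a continuous curve $\widetilde\gamma \subset \overline{B}$ from $q^-$ to $q^+$ with $\pi_L(\widetilde\gamma) \supset [-r + \beta r, r - \beta r]$, hence $\mathcal{H}^1(\pi_L(\widetilde\gamma)) \geq 2r - 2\beta r$. Writing
\[
    \pi_L(\widetilde\gamma) \subset \pi_L(\gamma \cap \overline{B}(x,r)) \cup \bigcup_i \pi_L(\mathrm{chord}_i),
\]
the chords whose endpoints lie on the same side of $\partial B \cap S$ contribute at most $\beta r$ to the union's measure by the observation above, so in that case one concludes
\[
    \mathcal{H}^1(\pi_L(\gamma \cap \overline{B}(x,r))) \geq (2r - 2\beta r) - \beta r = 2r - 3\beta r.
\]

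The main obstacle is handling ``crossing'' chords (those with endpoints on opposite sides of $\partial B \cap S$), whose $L$-projections span most of $[-r, r]$. A parity argument along the arc $\gamma$ shows the number of such crossings is odd (since $q^\pm$ lie on opposite sides), and combining this with the boundary bumping lemma applied to the connected component of $K \cap \overline{B}(x,r)$ containing $x$ — which must reach $\partial B(x,r)$ whenever $K \not\subset \overline{B}(x,r)$, a situation ensured in the relevant regime by the hypothesis $\mathrm{diam}(K) > r$ — provides additional projection mass in the ``middle'' of $[-r, r]$ that compensates for crossing chords, yielding $\mathcal{H}^1(\pi_L(K \cap \overline{B}(x,r))) \geq 2r - 3\beta r$ in all cases.
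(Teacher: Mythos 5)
Your non-crossing case is sound, but the crossing case is where the argument genuinely breaks, and the final sentence (``a parity argument \dots combining this with the boundary bumping lemma \dots provides additional projection mass \dots yielding $\geq 2r-3\beta r$ in all cases'') is a hope, not a proof. The underlying difficulty is that you commit to a single arc $\gamma$ from $q^-$ to $q^+$ and then try to lower-bound $\pi_L(\gamma\cap\overline B)$ alone. Nothing forces that particular arc to carry the mass: take $K$ to be the union of the diameter $L\cap\overline B(x,r)$ with a long loop that leaves $\overline B$ near $p^+$, travels around the outside of the ball, and re-enters near $p^-$. This $K$ is connected with small flatness (the diameter supplies the points of $K$ near every point of $L\cap\overline B$), but you may well select an arc $\gamma$ from $q^-$ to $q^+$ that uses the outer loop, in which case $\gamma\cap\overline B$ consists of two tiny pieces near $p^\pm$ and exactly one ``crossing'' excursion. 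The chord replacement then attributes essentially all of $[-r+\beta r, r-\beta r]$ to the crossing chord, and boundary bumping applied to the component of $x$ only buys you a projection of length about $r$ which overlaps completely with the crossing chord's projection. The numbers do not add up to $2r-3\beta r$, and indeed $\mathcal H^1(\pi_L(\gamma\cap\overline B))$ can be $\ll r$ in this configuration even though $\mathcal H^1(K\cap B)$ is not --- so any estimate of the form ``mass of $\pi_L(\gamma\cap\overline B)$ $\geq 2r - 3\beta r$'' is simply false for the chosen arc. The parity claim as stated is also shaky (there need not be any re-entering excursions at all if $q^+$ lies outside the ball), but that is secondary to the main problem.

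The paper avoids this entirely by not fixing any arc. Instead, for \emph{each} $a\in E:=\pi_L(K\cap B(x,r))$, it takes a point of $K$ above $a$ and a curve in $K$ from that point out of the ball: since the curve stays in the strip until it exits, its projection covers one of the two half-intervals $[a,rc_\varepsilon]$ or $[-rc_\varepsilon,a]$. This local property, valid at every $a\in E$, forces the complement $I:=[-rc_\varepsilon,rc_\varepsilon]\setminus E$ to be a single interval, and then --- and this is the step your argument never exploits --- the \emph{bilateral} clause of the flatness (points of $L$ lie within $\beta r$ of $K$) bounds $|I|\leq 2\varepsilon r$, because the product $I\times[-\varepsilon r,\varepsilon r]$ is inside $\overline B$ and disjoint from $K$. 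You only used bilaterality to locate the two endpoints $q^\pm$; that is not enough information to control gaps in the middle of the projection, which is precisely what the crossing-chord scenario can create. If you want to salvage your approach you would need to invoke the bilateral flatness again in the interior of the interval, at which point you essentially recover the paper's interval argument and the arc $\gamma$ becomes superfluous.
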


\begin{proof}
    The inequality is trivial if $\beta_K(x,r) \geq 1/4$ so we can assume $\beta_K(x,r) \leq 1/4$ without loss of generality.
    The difficulty here is that even though $K$ is connected, it may be that $K\cap B(x,r)$ is not.
    Let $\varepsilon$ be such that $\beta_K(x,r) < \varepsilon \leq 1/2$.
    To simplify the notations, we assume that $x = (0,0)$ and that horizontal axis $L = \R \times \set{0}$ achieves the infimum in the definition of $\beta_K(x,r)$. Therefore, $K\cap B(x,r)$ is contained in the strip $S$ defined by
    $$S := B(x,r) \cap \set{(z_1,z_2) \in \R^2 | |z_2| \leq \varepsilon r}.$$
    Let $\pi_1:\R^2\to L$ be the orthogonal projection onto $L$.
    As $\pi_1$ is $1$-Lipschitz, we know that
    $$\mathcal{H}^1(K\cap B(x,r)) \geq \mathcal{H}^1(\pi_1(K\cap B(x,r))).$$
    Let us define $E := \pi_1(K\cap B(x,r))$ and introduce the constant
    $$c_\varepsilon := \sqrt{1-\varepsilon^2}.$$
    We are going to use the fact that $K$ is connected to show that $L\cap [-r c_\varepsilon , r c_\varepsilon] \setminus E$ is an interval (we identify $L$ with the real axis).
    For each $a \in E$, there exists $|t| \leq \varepsilon r$ such that $z_0 := (a,t) \in K$, and since $r < \diam(K)/2$ there exists a curve $\Gamma$ that connects $z_0$ to some point $z_1 \in K \setminus B(x,r)$.
    But then $E$ has to contain $\pi_1(\Gamma)$, and since $\beta_K(x,r) \leq 1/4$ it means that either $[a,rc_\varepsilon] \subset E$ or $[-rc_\varepsilon,a] \subset E$.
    Indeed, the curve $\Gamma$ is contained in the strip $S$ and has to ``escape the ball'' $B(x,r)$ either from the right or from the left.
    The projection with minimal length would be when $\Gamma$ escapes exactly at the corner of $S\cap B(x,r)$ which gives the definition of $c_\varepsilon$ (see the picture below).

    This holds true for all $a \in E$, which necessarily imply that $[- c_\varepsilon r, c_\varepsilon r] \setminus E$ is an interval, that we denote by $I$. As $\left(I \times [-\varepsilon r, \varepsilon r]\right) \cap K = \emptyset$, we must have $\abs{I} \leq 2 \varepsilon r$ otherwise the center of $I$ is a distance $\geq \varepsilon r$ from $K$ whence $\beta_K(x,r) \geq \varepsilon$, which would contredict the definition of $\varepsilon$.
    All in all we have proved that
    $$\mathcal{H}^1(K\cap B(x,r)) \geq \mathcal{H}^1(\pi_1(K\cap B(x,r))) \geq 2rc_\varepsilon - 2 \varepsilon r.$$
    Now, we use $\sqrt{1 - \varepsilon^2} \geq 1 - \varepsilon$ to estimate
    \begin{align*}
        2 c_\varepsilon r - 2 \varepsilon r &= 2 r \left(\sqrt{1 - \varepsilon^2} - \varepsilon\right)\\
                                            &\geq 2 r \left(1 - 2 \varepsilon\right).
    \end{align*}
    Since $\varepsilon$ can be chosen arbitrary close to $\beta_K(x,r)$, the result follows.
    \begin{center}
        \begin{figure}[ht]
            \psscalebox{0.8 0.8} 
            {
                \begin{pspicture}(0,-5.653333)(12.935169,3.0666666)
                    \definecolor{colour1}{rgb}{0.2,0.3019608,0.7019608}
                    \definecolor{colour0}{rgb}{0.3019608,0.3019608,0.3019608}
                    \psellipse[linecolor=black, linewidth=0.04, dimen=outer](6.08,-1.2933333)(4.5933332,4.36)
                    \psdots[linecolor=black, dotsize=0.17653137](6.0533333,-1.2666667)
                    \rput[bl](8.66,2.68){$B(x,r)$}
                    \rput[bl](5.886667,-0.6){$x$}
                    \psline[linecolor=black, linewidth=0.04](10.42,0.093333334)(1.7266667,0.04)
                    \psline[linecolor=black, linewidth=0.04](1.6733333,-2.5333333)(10.513333,-2.4933333)(10.513333,-2.4933333)
                    \psline[linecolor=black, linewidth=0.04](0.08666665,-1.3066666)(12.46,-1.2666667)
                    \rput[bl](11.953333,-1.1133333){$L$}
                    \psbezier[linecolor=black, linewidth=0.04](7.7266665,-0.44)(8.131787,-0.95456684)(7.086648,-1.2861348)(6.0866666,-1.28)(5.0866857,-1.2738651)(3.8177464,-1.1199914)(2.9133334,-0.6933333)(2.0089204,-0.26667523)(2.1646423,-1.0502632)(1.3666667,0.94666666)(0.56869096,2.9435966)(-0.3955932,0.68546563)(0.56666666,0.41333333)
                    \psbezier[linecolor=black, linewidth=0.04](9.713333,-0.6666667)(8.752987,-0.9454769)(10.284147,-2.0273814)(11.206667,-2.4133333333333304)(12.129186,-2.7992852)(11.101433,-1.2380844)(11.953333,-1.8266667)
                    \rput[bl](11.873333,-2.4){$K$}
                    \psline[linecolor=black, linewidth=0.04, linestyle=dashed, dash=0.17638889cm 0.10583334cm](7.806667,0.44)(7.8333335,-2.88)
                    \psline[linecolor=black, linewidth=0.04, linestyle=dashed, dash=0.17638889cm 0.10583334cm](9.3133335,0.6)(9.42,-2.9733334)
                    \rput[bl](8.333333,-1.1333333){$I$}
                    \psline[linecolor=black, linewidth=0.04, linestyle=dashed, dash=0.17638889cm 0.10583334cm](1.7533333,0.04)(1.74,-2.48)
                    \psline[linecolor=black, linewidth=0.04, arrowsize=0.05291667cm 2.0,arrowlength=1.4,arrowinset=0.0]{<-}(1.6066667,-1.2)(1.1266667,-0.38666666)
                    \psline[linecolor=colour1, linewidth=0.08, tbarsize=0.07055555cm 5.0]{|*-|*}(7.82,-1.2666667)(9.353333,-1.2666667)
                    \psline[linecolor=colour0, linewidth=0.04, tbarsize=0.07055555cm 5.0]{|*-|*}(11.02,0.10666667)(11.033334,-1.24)
                    \rput[bl](11.353333,-0.5466667){$\varepsilon r$}
                    \rput[bl](0.0,-0.23333333){$\sqrt{1 - \varepsilon^2} r$}
                \end{pspicture}
            }
        \end{figure}

        Figure 1: estimating the length of $K\cap B(x,r)$.
    \end{center}
\end{proof}

We now come to the interesting ``reverse H\"older'' type estimate that will be needed later.

\begin{corollary}\label{corollary1}
    Let $p \in (1,2]$, let $(u,K) \in \mathcal A(\Omega)$ be a minimizer of the Griffith functional. Let $x_0 \in K$ and $r \in (0,\diam(K))$ be such that $B(x_0,r) \subset \Omega$. Then there exists a constant $C_p \geq 1$ (depending on $\bf A$ and $p$) and a radius $\rho \in (r/2,r)$ such that
    $$\omega_2(x_0,\rho) \leq C_p \big(\omega_p(x_0,r) + \beta(x_0,r)\big).$$
\end{corollary}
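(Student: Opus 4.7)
The plan is to combine the upper bound from Proposition~\ref{prop1} with the topological lower bound from Lemma~\ref{topological} so that the leading-order term $2\rho$ cancels and only the quantities $\omega_p(x_0,r)$ and $\beta(x_0,r)$ survive on the right-hand side. The coercivity of $\mathbf A$ (guaranteed by $\mu>0$ and $\lambda+\mu>0$) will then translate the estimate for $\mathbf A e(u):e(u)$ into one for $|e(u)|^2$, and dividing by $\rho$ will produce $\omega_2(x_0,\rho)$.

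First, I would invoke Proposition~\ref{prop1} to obtain a radius $\rho\in(r/2,r)$ and a universal constant $C>0$ with
$$\int_{B(x_0,\rho)\setminus K} \! \mathbf A e(u){:}e(u)\dm x+\mathcal H^1(K\cap B(x_0,\rho))\le 2\rho+C\rho\bigl(\omega_p(x_0,r)+\beta(x_0,r)\bigr).$$
Next, I would apply Lemma~\ref{topological} at the scale $\rho$. Since $\rho\in(r/2,r)\subset(0,\operatorname{diam}(K))$ and, by \eqref{brutest2}, $\beta_K(x_0,\rho)\le (r/\rho)\beta(x_0,r)\le 2\beta(x_0,r)$, the hypothesis $\beta_K(x_0,\rho)\le 1/2$ is satisfied provided we assume $\beta(x_0,r)$ slightly smaller than $1/2$ (which we may do at the cost of an absolute constant). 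Lemma~\ref{topological} then delivers
$$\mathcal H^1(K\cap B(x_0,\rho))\ge 2\rho-3\rho\,\beta_K(x_0,\rho)\ge 2\rho-6\rho\,\beta(x_0,r).$$

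Subtracting this lower bound from the Proposition~\ref{prop1} estimate makes the critical $2\rho$ terms cancel and yields
$$\int_{B(x_0,\rho)\setminus K}\!\mathbf A e(u){:}e(u)\dm x\le C\rho\bigl(\omega_p(x_0,r)+\beta(x_0,r)\bigr).$$
By ellipticity of $\mathbf A$ one has $\mathbf A e(u){:}e(u)\ge c|e(u)|^2$ with $c>0$ universal, so dividing by $\rho$ gives exactly
$$\omega_2(x_0,\rho)=\rho^{-1}\!\int_{B(x_0,\rho)\setminus K}\!|e(u)|^2\dm x\le C\bigl(\omega_p(x_0,r)+\beta(x_0,r)\bigr),$$
which is the claim.

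The conceptually interesting step is the cancellation of the $2\rho$ terms: the Proposition furnishes a sharp upper bound where the competitor replaces $K$ in $B(x_0,\rho)$ by a diameter of the disc plus a thin wall on the boundary, while Lemma~\ref{topological} says that a compact connected set passing through $B(x_0,\rho)$ must have $\mathcal H^1$ measure at least as large as that diameter, modulo an error controlled by the flatness. No other mechanism would make the leading $2\rho$ term vanish, and this is precisely where the connectedness of $K$ and the finite-diameter hypothesis $r<\operatorname{diam}(K)$ enter. The only mild technicality is ensuring that $\beta_K(x_0,\rho)\le 1/2$ so that Lemma~\ref{topological} applies; this is handled by the scaling relation \eqref{brutest2} together with $\rho\ge r/2$.
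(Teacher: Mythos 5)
Your proposal follows exactly the same route as the paper: apply Proposition~\ref{prop1} to get the $2\rho$ upper bound, apply Lemma~\ref{topological} at scale $\rho$ to get the matching $2\rho$ lower bound on $\mathcal H^1(K\cap B(x_0,\rho))$, cancel, and use ellipticity of $\mathbf A$ to pass from $\mathbf A e(u){:}e(u)$ to $|e(u)|^2$. The small technicality you flag about ensuring $\beta_K(x_0,\rho)\le 1/2$ is present in the paper too (which silently assumes $\beta(x_0,r)$ small when writing $\beta(x_0,\rho)\le 2\beta(x_0,r)\le 1/5$), and your remark that it can be absorbed into the constant is the correct fix.
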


\begin{proof}
    If $\beta(x_0,r) \geq 1/2$, the inequality is straightforward. Indeed we know by the usual upper bound (\ref{eq_AF2}) that we have $\omega_2(x_0,\rho) \leq C$ for some constant $C \geq 1$. Therefore in this case, it suffices to choose $C_p \geq 2C$ in Corollary \ref{corollary1}.
    We now assume that $\beta(x_0,r) \leq 1/2$.
    By Proposition~\ref{prop1}, we already know that there exists a constant $C \geq 1$ and a radius $\rho \in (r/2,r)$ such that
    $$\int_{B(x_0,\rho) \setminus K} \! \mathbf Ae(u):e(u) \dd{x} + \mathcal{H}^1(K\cap B(x_0,\rho)) \leq 2\rho + C\rho\big(\omega_p(x_0,r) + \beta(x_0,r)\big).$$
    Then, we apply Lemma~\ref{topological} in $B(x_0,\rho)$ which yields
    $$\mathcal{H}^1(K\cap B(x_0,\rho)) \geq 2\rho - 4 \rho \beta(x_0,r\rho)$$
    hence,
    $$\int_{B(x_0,\rho) \setminus K} \! \mathbf Ae(u):e(u) \dd{x} \leq C\rho\big(\omega_p(x_0,r) + \beta(x_0,r)\big).$$
    Finally, by ellipticity of $A$ we get
    $$\int_{B(x_0,\rho) \setminus K} \! \mathbf Ae(u):e(u) \dd{x} \geq C^{-1} \int_{B(x_0,\rho)} \abs{e(u)}^2 \dd{x} = C^{-1} \omega_2(x_0,\rho) \rho,$$
    which finishes the proof.
\end{proof}


\section{Porosity of the bad set}

Given $0 < \alpha < 1$, $x_0 \in K$ and $r > 0$ such that $B(x,r) \subset \Omega$, we say that the crack-set $K$ is $C^{1,\alpha}$-regular in the ball $B(x_0,r)$ if it is the graph of a $C^{1,\alpha}$ function $f$ such that, in a convenient coordinate system it holds $f(0) = 0$, $f'(0) = 0$ and $r^\alpha \|f'\|_{C^\alpha} \leq 1/16$.
We recall the following $\varepsilon$-regularity theorem coming from \cite{bil}. 

\begin{theorem}\cite{bil}\label{BILth}
    Let $(u,K) \in \mathcal A(\Omega)$ be a minimizer of the Griffith functional with $K$ connected.
    There exists constants $a,\alpha ,\varepsilon_2 \in (0,1)$ (depending on $\bf A$) such that the following property holds true.
    Let $x_0 \in K$ and $r > 0$ be such that $B(x_0,r) \subset \Omega$ and
    $$\omega_2(x_0,r) + \beta(x_0,r) \leq \varepsilon_2,$$
    and $K$ separates $B(x_0,r)$.
    Then $K$ is $C^{1,\alpha}$-regular in $B(x_0,ar)$.
\end{theorem}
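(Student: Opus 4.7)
The plan is to establish the $\varepsilon$-regularity by a contradiction and blow-up argument, following the classical template used for the scalar Mumford--Shah functional and adapting it to the vectorial Griffith setting. The main target is a one-scale decay inequality of the form
$$\omega_2(x_0, \tau r) + \beta(x_0, \tau r) \leq \tfrac{1}{2}\tau^{2\alpha}\bigl(\omega_2(x_0,r) + \beta(x_0,r)\bigr)$$
for some fixed $\tau \in (0,1)$ and $\alpha \in (0,1)$ chosen a priori, valid whenever the quantity at scale $r$ is below a threshold $\varepsilon_2$ and $K$ separates in $\overline B(x_0,r)$. Once this is available, iterating at all scales $\tau^k r$ produces flatness decaying at a H\"older rate, which by a standard argument implies that $K$ is locally the graph of a $C^{1,\alpha}$ function in $B(x_0,ar)$ for an appropriate $a$ depending on $\tau$.

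To prove the decay, I would argue by contradiction. Suppose there exists a sequence of Griffith minimizers $(u_n, K_n)$ with $x_n \in K_n$, radii $r_n$, $\varepsilon_n := \omega_2(x_n,r_n) + \beta(x_n,r_n) \to 0$, and $K_n$ separating $D^\pm(x_n,r_n)$ in $\overline B(x_n,r_n)$, but for which the decay fails. After translating and rescaling so that $x_n = 0$ and $r_n = 1$, Lemma~\ref{topological1} combined with the separation hypothesis forces the Hausdorff limit of $K_n$ in $B(0,1)$ to be exactly a line $L$ through the origin. On each of the two subdomains $A_n^\pm$ lying on one side of the thin strip around the approximating line $L_n$, Korn's inequality provides skew-symmetric matrices $R_n^\pm$ and constants $c_n^\pm$ such that the rescaled displacements $v_n^\pm(x) := \varepsilon_n^{-1/2}(u_n(x) - R_n^\pm x - c_n^\pm)$ are bounded in $H^1(A_n^\pm;\R^2)$. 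Up to a subsequence they converge weakly to limits $v_\infty^\pm \in H^1(D^\pm;\R^2)$, where $D^\pm$ are the two open half-disks of $B(0,1) \setminus L$.

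The next step is to identify $(v_\infty^+, v_\infty^-)$ as a minimizer of the linearized elasticity (Lam\'e) energy on $B(0,1) \setminus L$ with free boundary condition on $L$. This is where the Griffith minimality of $(u_n,K_n)$ enters, in combination with a competitor construction in the spirit of Lemma~\ref{extension}: any test deformation on $D^\pm$ can be lifted to an admissible competitor for $(u_n,K_n)$ by gluing across the thin strip, at an additional energy and $\HH^1$ cost that vanishes under the rescaling. Classical elliptic regularity for the Lam\'e system on a half-disk with Neumann condition on the diameter then yields a Campanato-type decay of $\int_{B_\rho}|e(v_\infty^\pm)|^2$, together with $C^{1,\alpha}$ control of the limit at the origin. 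Reading these two pieces of information back through the normalization contradicts the assumed failure of decay, provided $\tau$ is fixed small a priori and $\varepsilon_2$ is chosen accordingly.

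The main obstacle will be the identification of the blow-up limit as a Lam\'e minimizer. Concretely this requires: (i) a uniform control of the rigid motions $R_n^\pm$ and constants $c_n^\pm$ so that no concentration occurs under the $\varepsilon_n^{-1/2}$ rescaling, which is delicate because the half-domains $A_n^\pm$ themselves vary with $n$ and the strip separating them shrinks; (ii) the construction of admissible recovery sequences for arbitrary test fields in $H^1(D^\pm;\R^2)$, relying crucially on the separation property to glue competitors across the vanishing strip while keeping them admissible; and (iii) a $\Gamma$-liminf inequality for both the elastic energy and the one-dimensional Hausdorff measure of $K_n$, the latter guaranteed by the lower semicontinuity of $\HH^1$ along connected sets. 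Once these three ingredients are in place, the Campanato iteration and the passage from flatness decay to $C^{1,\alpha}$ graph regularity are routine.
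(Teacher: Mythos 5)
Your proposal sketches a contradiction--and--blow-up scheme, but that is not how the statement is established either in this paper or in the cited reference. The paper's own ``proof'' is bookkeeping: it reads the conclusion directly off the proof of \cite[Proposition~3.4]{bil}, which produces explicit thresholds $\delta_1,\delta_2$ such that $\omega_2(x_0,r)\le\delta_2$, $\beta(x_0,r)\le\delta_1$ together with separation imply the quantitative flatness decay $\beta(y,tr)\le C t^\alpha$ for all $y\in B(x_0,r/2)$ and $t\in(0,1/2)$, and then invokes \cite[Lemma~6.4]{bil} and the estimate (6.8) there to pass from flatness decay to a $C^{1,\alpha}$ graph. The decay in \cite{bil} is itself proved by direct competitor constructions and monotonicity/stopping-time arguments in the spirit of Lemma~\ref{extension} and Proposition~\ref{prop1} above and of David's book \cite{d} for Mumford--Shah, \emph{not} by compactness.

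This distinction matters, because the compactness steps you flag in (i)--(iii) are precisely the ones known to be problematic in the Griffith setting: $u$ lives only in an $LD$-type space (no $L^\infty$ truncation, no $BV$-compactness of $Du$), the rigid motions $R_n^\pm,c_n^\pm$ on the shrinking half-domains $A_n^\pm$ have no a priori control, and there is no closure result asserting that blow-up limits of Griffith minimizers are Lam\'e minimizers on a cracked disk. Avoiding exactly these difficulties is the reason \cite{bil}, and related works such as \cite{CFI,CC}, rely on explicit competitors rather than blow-up. As written, your plan identifies the right obstacles but leaves each of them unresolved, and resolving them would require substantially new arguments; it therefore does not constitute a proof of Theorem~\ref{BILth}, nor does it match the (quotational) proof given in the paper or the competitor-based proof in the reference it cites.
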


\begin{proof}
    Unfortunately, the above statement is not explicitely stated in \cite{bil}, but it directly follows from the proof of \cite[Proposition~3.4]{bil}.
    Indeed, in the latter proof, some explicit thresholds $\delta_1 > 0$ and $\delta_2 > 0$ and an exponent $\alpha \in (0,1)$ are given so that, provided that
    $$\omega_2(x_0,r) \leq \delta_2, \quad \beta(x_0,r) \leq \delta_1, $$
    and $K$ separates $B(x_0,r)$, then
    $$\beta(y,t) \leq C \left(\frac{t}{r}\right)^\alpha$$
    for all $y \in B(x_0,r/2)$ and $t \in (0,r/2)$.
    It implies that there exists $a \in (0,1)$ (which depends on $C$ and $\alpha$) such that $B(x_0,a r)$ is a $C^{1,\alpha}$ and $10^{-2}$-Lipschitz graph above a linear line (thanks to \cite[Lemma 6.4]{bil}). The line can be chosen to be the tangent line to $K$ at $x_0$ so that the graph satisfies $f'(0) = 0$.
    In addition the estimate (6.8) in \cite{bil} says moreover $r^{\alpha} \|f'\|_{C^{0,\alpha}}\leq C$, from which we easily get $(a r)^\alpha\|f'\|_{C^{0,\alpha}}\leq 1/16$ up to take a smaller constant $a$.
    The fact that $\alpha$ and $a$ depend only on $\bf A$, follows from a careful inspection of the proof in \cite{bil}.
\end{proof}

We are now in position to prove the following, which says that the singular set is porous in $K$.

\begin{proposition}\label{porosity}
    Let $(u,K) \in \mathcal A(\Omega)$ be a minimizer of the Griffith functional with $K$ connected.
    There exists constants $a \in (0,1/2)$ (depending on $\bf A$) such that the following property holds true.
    For all $x_0 \in K$ and $r > 0$ such that $B(x_0,r) \subset \Omega$, there exists $y \in K\cap B(x_0,r/2)$ such that $K\cap B(y,ar)$ is $C^{1,\alpha}$-regular (where $\alpha$ is the constant of Theorem~\ref{BILth}).
\end{proposition}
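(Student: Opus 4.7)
The plan is to combine the two Carleson-type estimates (Proposition~\ref{carleson2} on $\omega_p$ and the Bilateral Weak Geometric Lemma underlying Lemma~\ref{carleson1} on $\beta$) with the reverse-Hölder inequality of Corollary~\ref{corollary1} to produce, inside any initial ball $B(x_0, r)$, a sub-ball centered at a point of $K$ where the hypotheses of Theorem~\ref{BILth} are met. Fix two small parameters $\varepsilon, \eta > 0$ to be chosen in terms of the universal threshold $\varepsilon_2$ of Theorem~\ref{BILth}.

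First I would find a good initial pair $(z, t)$ as follows. The $\tfrac{\dm t}{t} \dm \HH^1$-measure of the region $K \cap B(x_0, r/2) \times (a_0 r, r/2)$ is at least $C_0^{-1} r \log(1/(2a_0))$ by local Ahlfors-regularity. On this region, the BWGL (as used in the proof of Lemma~\ref{carleson1}) bounds the measure of the bad set $\{\beta_K > \varepsilon\}$ by $C(\varepsilon) r$, and Chebyshev applied to Proposition~\ref{carleson2} bounds the measure of $\{\omega_p > \eta\}$ by $\eta^{-1} C_p r$. Choosing $a_0$ small enough that $\log(1/(2a_0))$ dominates these contributions produces a pair with $z \in K \cap B(x_0, r/2)$, $t \in (a_0 r, r/2)$, $\beta(z, t) \leq \varepsilon$, and $\omega_p(z, t) \leq \eta$.

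Next, following the arc-connectedness argument in the second half of the proof of Lemma~\ref{carleson1}, a curve $\Gamma \subset K$ issued from $z$ and contained in the $\varepsilon t$-strip around $L(z,t)$ reaches $\partial B(z, t)$; after shifting the center to $y' = z \pm (t/2) \mathbf{e}_1$, the ball $B(y', t/4)$ has $\beta(y', t/4) \leq 4\varepsilon$ and $K$ separates in it. Since $y' \notin K$ in general, I would replace it by the intersection of $\Gamma$ with the vertical line through $y'$, which gives $y \in K$ with $|y - y'| \leq \varepsilon t$. Taking $s$ a universal fraction of $t$, the Remark following \eqref{brutest2} yields $\beta(y, s) \leq C\varepsilon$, monotonicity and the scaling of $\omega_p$ yield $\omega_p(y, s) \leq C \eta$, and the fact that $\Gamma$ passes through $y$ inside a strip narrower than $s/2$ (for $\varepsilon$ small) produces separation in $\overline{B}(y, s)$ by a planar Jordan-type argument.

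Finally, Corollary~\ref{corollary1} applied at $(y, s)$ yields $\rho \in (s/2, s)$ with $\omega_2(y, \rho) \leq C'(\eta + \varepsilon)$, while \eqref{brutest2} gives $\beta(y, \rho) \leq 2\beta(y, s) \leq C\varepsilon$ and Lemma~\ref{topological1} (with $\tau = C\varepsilon$, for $\varepsilon$ small enough) propagates separation from $B(y, s)$ to $B(y, \rho)$. Choosing $\varepsilon, \eta$ small enough so that $\omega_2(y, \rho) + \beta(y, \rho) \leq \varepsilon_2$, Theorem~\ref{BILth} applies and yields that $K$ is $C^{1,\alpha}$-regular in $B(y, a\rho)$, with $\rho \geq c\, a_0\, r$ for a universal constant $c$, which proves the proposition up to relabeling the porosity constant. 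The main obstacle will be the transfer step described in the previous paragraph: Lemma~\ref{carleson1} naturally gives separation at a shifted center $y' \notin K$, whereas all subsequent machinery (Corollary~\ref{corollary1} and Theorem~\ref{BILth}) requires the center to lie in $K$, and one must carry over small flatness, $\omega_p$-smallness, and the topological separation simultaneously via the structure of the curve $\Gamma$.
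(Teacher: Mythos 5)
Your plan is correct and uses the same core ingredients as the paper (the two Carleson-type estimates, the reverse-H\"older bound from Corollary~\ref{corollary1}, and the $\varepsilon$-regularity Theorem~\ref{BILth}), but the sequencing is genuinely different. The paper invokes Lemma~\ref{carleson1} as a black box \emph{first}, which delivers a ball $B(y,t)$ with small flatness and separation; it then applies Proposition~\ref{carleson2} \emph{inside} $B(y,t)$ and pigeonholes a good $\omega_p$-scale $s\in(bt,t/2)$, and because $b$ may be tiny, the transfer $\beta(z,s)\le (2/b)\,\beta(y,t)$ loses a factor $1/b$; the constant $\varepsilon_0$ in Lemma~\ref{carleson1} must be tuned precisely to absorb this loss, which is why $b$ and $\varepsilon_0$ are fixed before anything else. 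You instead unbundle Lemma~\ref{carleson1}: take only its Chebyshev/BWGL half, run it simultaneously with the Chebyshev of Proposition~\ref{carleson2} in a single union bound on $K\cap B(x_0,r/2)\times(a_0 r,r/2)$, and only afterwards rerun the arc-connectedness argument. The advantage of your ordering is that the final center shift (from $z$ to a point $y$ of $\Gamma$ near $y'$) and the final radius drop ($s$ a universal fraction of $t$) are by \emph{universal} factors, so $\beta$ and $\omega_p$ degrade only by universal constants and no delicate tuning of $b$ against $\varepsilon_0$ is needed; the price is that you must repeat the topological separation step explicitly. The ``transfer'' issue you flag at the end --- that Lemma~\ref{carleson1} naturally lands on a shifted center $y'\notin K$ while Corollary~\ref{corollary1} and Theorem~\ref{BILth} require a center in $K$ --- is real and is handled, as you propose, by moving along $\Gamma$ to a nearby point of $K$; note that the paper's own proof has a symmetric subtlety in its last line, where separation is carried from $B(y,t)$ to a non-concentric ball $B(z,s')$ via Lemma~\ref{topological1}, which as stated is concentric, so neither route gets this for free. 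Two small points to be careful about in a full write-up: (i) you need $t\le\mathrm{diam}(K)$ for the arc-connectedness step, which is arranged as in Lemma~\ref{carleson1} by first reducing to $r\le\tfrac16\mathrm{diam}(K)$; (ii) the BWGL bound is stated for the enlarged set $E$ of \eqref{carleson_E}, and you are tacitly using that $\beta_E(z,s)=\beta_K(z,s)$ on the range of pairs you integrate over, which is exactly the reduction the paper carries out.
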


\begin{proof}
    In view Theorem~\ref{BILth}, it is enough to prove the following fact: there exists $a \in (0,1/2)$ such that for all $x \in K$ and $r > 0$ with $B(x,r) \subset \Omega$, there exists $y \in K\cap B(x,r/2)$ and $ar < s < r/2$ such that
    $$\omega_2(y,s) + \beta(y,s) \leq \varepsilon_2$$
    and $K$ separates $B(y,s)$, where $\varepsilon_2$ is the constant of Theorem~\ref{BILth}.
    We already know from Lemma~\ref{lem_carleson1} how to control $\beta$ and the separation.
    We therefore need to add a control on $\omega_2$, and this will be done by applying successively Proposition~\ref{carleson2} and Corollary \ref{corollary1}, but we need to fix carefully the constants so that it compiles well.

    Let us pick any $p \in (1,2)$ and let $C_p$ be a constant which is bigger than the constant of Proposition~\ref{carleson2} and the constant of Corollary \ref{corollary1}.
    Let also $C_0$ be the constant used for Ahlfors-regularity, see (\ref{eq_AF}).
    Then we define
    $$b := \frac{1}{2} \exp\left(\frac{-16 C_0 C_p^2}{\varepsilon_2}\right)$$
    and we let $\varepsilon_0 \in (0,1/2)$ be a small constant which will be fixed during the proof and will depend only on $\mathbf{A}$.
    We fix $x \in K$ and $r > 0$ such that $B(x,r) \subset \Omega$.
    As noticed at the beginning of the proof of Lemma~\ref{lem_carleson1}, we can assume without loss of generality that $B(x,10 r) \subset \Omega$ and $r \leq \mathrm{diam}(K) / 10$.
    We apply Lemma~\ref{lem_carleson1} with the previous definition of $\varepsilon_0$ and we get that for some $a \in (0,1/2)$ (depending on $\mathbf{A}$), there exists $y \in B(x,r/2)$ and $t \in (ar,r/2)$ satisfying:
    $$\beta (y,t) \leq \varepsilon_0 \ \text{and} \ K \ \text{separates} \ B(y,t) \ \text{as in Definition~\ref{separation}}.$$ Note that the assumption of Lemma~\ref{lem_carleson1} is satisfied thanks to (\ref{eq_AF}).
    Then we apply Proposition~\ref{carleson2} in $B(y,t)$ which yields
    \begin{eqnarray}
        \int_{z \in K\cap B(y,t)} \! \int_{0<s<t} \! \omega_p(z,s) \frac{\dd{s}}{s} \dd{\HH^1(z)} \leq C_p t.\label{intint}
    \end{eqnarray}
    From this estimate we claim that we obtain the following fact:
    \begin{eqnarray}
        \text{there exists} \ z \in B(y,t/2) \ \text{and} \ s \in (b t, t/2) \ \text{such that} \ \omega_p(z,s) \leq \frac{\varepsilon_2}{4C_p}.\label{claim}
    \end{eqnarray}
    Indeed, remember that by Ahlfors-regularity $\HH^1(K \cap B(y,t/2)) \geq C_0^{-1} (t/2)$, see (\ref{eq_AF}), and if the claim (\ref{claim}) is not true, then
    \begin{align*}
        \int_{z \in K\cap B(y,t/2)} \! \int_{b t < s < t/2} \! \omega_p(z,t) \; \frac{\dd{s}}{s} \dd{\HH^1(z)} &\geq \frac{\varepsilon_2}{4C_p} \mathcal{H}^1(K\cap B(y,t/2)) \int_{b t}^{t/2} \! \frac{\dd{t}}{t}\\
                                                                                                              &\geq \frac{t\varepsilon_2}{8C_p C_0} \ln(\frac{1}{2 b}).
    \end{align*}
    Returning back to \eqref{intint}, we get
    $$\ln(\frac{1}{2 b}) \leq \frac{8C_0 C_p^2}{\varepsilon_2}$$
    which contredicts our definition of $b$. The claim is now proved.

    Now let us check what we have got in the ball $B(z,s)$.
    We already know that $s \geq c r$ for some constant $c \in (0,1/2)$ (which depends on on $\mathbf{A}$) and $\omega_p(z, s) \leq \frac{\varepsilon_2}{4C_0}$. Concerning $\beta$, we have
    $$\beta(z,s) \leq \frac{2}{b}\beta(y,t) \leq \frac{2}{b}\varepsilon_0$$
    and here we can assume $\varepsilon_0$ small enough such that $\beta(z,s) \leq \varepsilon_2/(4 C_0)$.
    Next, we apply Corollary \ref{corollary1} which says that there exists $s' \in (s/2,s)$ such that
    $$\omega_2(z,s') \leq C_0\big(\omega_p(z,s) + \beta(z,s)\big) \leq \frac{\varepsilon_2}{2}.$$
    The ball $B(z,s')$ satisfies all the required properties because $\beta(z,s') \leq 2\beta(z,s) \leq \varepsilon_2/2$ so that
    $$\omega_2(z,s') + \beta(z,s') \leq \varepsilon_2,$$
    as required.

    It remains to see that $K$ still separates the ball $B(z,s')$.
    But since we know that $\beta(y,t) \leq \varepsilon_0$, that $K$ separates $B(y,t)$ and that $z \in B(y,t/2)$, $s \in (bt, t/2)$, it follows from Lemma \ref{topological1} that $\varepsilon_0$ can be chosen small enough so that $K$ separates $B(z,s')$.
\end{proof}


We are now ready to state one of our main results about the the Hausdorff dimension of the singular set.

\begin{corollary}
    Let $(u,K) \in \mathcal A(\Omega)$ be a minimizer of the Griffith functional with $K$ connected.
    Then there exists a closed set $\Sigma \subset K$ such that $\dim_\mathcal{H}(\Sigma) < 1$ and $K \setminus \Sigma$ is locally a $C^{1,\alpha}$ curve.
\end{corollary}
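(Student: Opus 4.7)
The plan is to define $\Sigma$ as the failure set of $C^{1,\alpha}$-regularity and then to derive the dimension bound from Proposition~\ref{porosity} via the classical porosity-plus-Ahlfors-regularity argument.

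First, I would set
\[
    \Sigma := \{\, x \in K \cap \Omega : K \text{ is not } C^{1,\alpha}\text{-regular in any ball } B(x,r) \subset \Omega \,\}.
\]
By construction $(K \cap \Omega) \setminus \Sigma$ is relatively open in $K \cap \Omega$, so $\Sigma$ is relatively closed in $K \cap \Omega$, and at every point of $(K \cap \Omega) \setminus \Sigma$ the set $K$ is locally a $C^{1,\alpha}$ curve. The regularity assertion of the corollary is thus built in, and the only content left to verify is $\dim_{\mathcal{H}}(\Sigma) < 1$.

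For this, Proposition~\ref{porosity} immediately yields a uniform porosity of $\Sigma$ relative to $K$: for every $x \in \Sigma$ and every $0 < r \leq r_0$ with $B(x,r) \subset \Omega$, there is a point $y \in K \cap B(x, r/2)$ such that $K \cap B(y, ar)$ is a $C^{1,\alpha}$ curve, and hence $B(y, ar) \cap \Sigma = \emptyset$. Applying the local $1$-Ahlfors regularity of $K$ on $B(y, ar)$ (giving $\mathcal{H}^1(K \cap B(y, ar)) \geq C_0^{-1} a r$) and on $B(x, r)$ (giving $\mathcal{H}^1(K \cap B(x, r)) \leq C_0 r$), this turns into the quantitative density drop
\[
    \mathcal{H}^1(K \cap B(x, r) \cap \Sigma) \;\leq\; \mathcal{H}^1(K \cap B(x, r)) - \mathcal{H}^1(K \cap B(y, ar)) \;\leq\; (1 - \eta)\, \mathcal{H}^1(K \cap B(x, r)),
\]
with $\eta := a / C_0^2 > 0$ independent of $x$ and $r$.

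Finally, I would convert this uniform density drop inside the $1$-Ahlfors regular set $K$ into a strict Hausdorff-dimension bound by the standard dyadic iteration: repeatedly applying the above drop on the scales $r_0, r_0/2, r_0/4, \ldots$ inside each ball centered at a point of $\Sigma$ yields an exponential decay of the form
\[
    \mathcal{H}^1\bigl( \{ z \in K \cap B(x_0, r_0/2) : \mathrm{dist}(z, \Sigma) < \rho \} \bigr) \;\leq\; C \rho^{\theta}
\]
for some $\theta > 0$ depending only on $a$ and $C_0$; equivalently the minimal number of $\rho$-balls covering $\Sigma \cap B(x_0, r_0/2)$ is bounded by $C \rho^{-(1-\theta)}$, which gives $\dim_{\mathcal{H}}(\Sigma) \leq 1 - \theta < 1$. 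I expect this last iteration to be the main obstacle, but it is a purely geometric argument that is carried out in detail for the Mumford-Shah singular set in David~\cite{d} and Rigot~\cite{rigot}; those arguments transfer verbatim to the present setting, since they rely only on the $1$-Ahlfors regularity of $K$ and on the uniform porosity of $\Sigma$ in $K$, both of which are available here.
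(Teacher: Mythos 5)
Your proposal is correct and follows exactly the route the paper takes: the paper's proof is a one-line appeal to the standard porosity-plus-Ahlfors-regularity argument of Rigot (\cite[Remark~3.29]{rigot}), and your proposal simply spells out the ingredients of that argument (defining $\Sigma$ as the failure set, extracting the uniform relative porosity of $\Sigma$ in $K$ from Proposition~\ref{porosity}, and iterating the density drop dyadically), citing the same sources for the final covering iteration. The only hair worth splitting is that from Proposition~\ref{porosity} you get directly that $K \cap B(y,ar)$ is a $C^{1,\alpha}$ graph; to conclude $B(y,ar) \cap \Sigma = \emptyset$ you also need the (easy, standard, usually implicit) observation that every point $z$ of that graph admits some small ball centered at $z$, with suitably rotated coordinates, in which $K$ is $C^{1,\alpha}$-regular in the normalized sense of the definition — but this is exactly the kind of step that is elided in the references you cite, so the proposal is sound.
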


\begin{proof}
    The proof is standard now that Proposition~\ref{porosity} is established.
    Indeed, we can argue exactly as Rigot in \cite[Remark 3.29]{rigot} which we refer to for more detail.
\end{proof}

\section{Higher integrability of \texorpdfstring{$e(u)$}{e(u)}}

\begin{theorem}\label{thm_integrability}
    Let $(u,K) \in \mathcal A(\Omega)$ be a minimizer of the Griffith functional with $K$ connected and $\HH^1(K) > 0$.
    There exists $C \geq 1$ and $p > 1$ (depending on $\mathbf{A}$) such that the following property holds true. For all $x \in \Omega$, for all $r > 0$ such that $B(x,r) \subset \Omega$,
    \begin{equation*}
        \int_{B\left(x,r/2\right)} \! \abs{e(u)}^{2p} \dd{x} \leq C r^{2-p}.
    \end{equation*}
\end{theorem}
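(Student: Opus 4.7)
The plan is to combine the porosity of the singular set (Proposition~\ref{porosity}) with boundary elliptic estimates for the Lam\'e system in $C^{1,\alpha}$ half-domains with homogeneous Neumann conditions, following the De Philippis--Figalli strategy \cite{DPF} (also used in \cite{LM}); the quantitative boundary estimates themselves are developed in the appendix. Let $\Sigma \subset K$ be the singular set furnished by Proposition~\ref{porosity} and set $\delta(x) := \mathrm{dist}(x,\Sigma)$. The key preliminary is the pointwise bound
\begin{equation*}
    |e(u)(x)|^{2} \leq \frac{C}{\delta(x)}, \qquad x \in B(x_0,r/2).
\end{equation*}
To prove it, let $y \in K$ be a nearest point of $K$ to $x$ (take $y = x$ if $x \in K$). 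If $\mathrm{dist}(x,K) > \delta(x)/2$, then $B(x,\delta(x)/2) \cap K = \emptyset$, $u$ solves the homogeneous Lam\'e system on this ball, and interior $L^{\infty}$ estimates combined with the Ahlfors bound \eqref{Ahlfors-droite} yield $|e(u)(x)|^{2} \leq C/\mathrm{dist}(x,K) \leq 2C/\delta(x)$. If instead $\mathrm{dist}(x,K) \leq \delta(x)/2$, then $\mathrm{dist}(y,\Sigma) \geq \delta(x)/2$, so $K \cap B(y,\delta(x)/2)$ is a $C^{1,\alpha}$-regular arc; $u$ then solves the Lam\'e system in $B(y,\delta(x)/2) \setminus K$ with homogeneous Neumann data on a $C^{1,\alpha}$ curve, and the scaled Schauder-type boundary $L^{\infty}$ estimate from the appendix, combined again with Ahlfors regularity, delivers the same bound.

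Next I would use porosity. Proposition~\ref{porosity} shows that $\Sigma$ is porous inside the Ahlfors $1$-regular set $K$; a standard covering argument then produces $s \in (0,1)$, depending only on $\mathbf{A}$, such that $\Sigma \cap B(x_0,r)$ can be covered by at most $C(r/t)^{s}$ balls of radius $t$. Taking $t$-neighborhoods in $\mathbb{R}^{2}$ gives
\begin{equation*}
    \left|\set{ x \in B(x_0,r/2) \ | \ \delta(x) \leq t}\right| \leq C\, r^{s}\, t^{2-s}, \qquad 0 < t \leq r.
\end{equation*}
Choosing $p \in (1,2-s)$, permissible since $s<1$, the layer-cake formula --- applied with the above estimate on $\{\delta \leq r\}$ and the trivial bound $|B(x_0,r/2)| \leq Cr^{2}$ on $\{\delta > r\}$ --- yields $\int_{B(x_0,r/2)} \delta(x)^{-p}\dm x \leq Cr^{2-p}$. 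Combined with the pointwise bound $|e(u)|^{2p} \leq C\delta^{-p}$ from the previous paragraph, this is precisely the claimed estimate.

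The main obstacle is the scale-invariant $L^{\infty}$ bound for $e(u)$ across a $C^{1,\alpha}$ Neumann interface used in the first step: Schauder-type estimates for the Lam\'e system with Neumann data on a $C^{1,\alpha}$ curve, applied scale-invariantly on the ball $B(y,\delta(x)/2)$, are not readily available in the literature, which is exactly why the appendix of the paper is devoted to them. Once these are at hand, the porosity-Minkowski step and the layer-cake reduction are essentially routine.
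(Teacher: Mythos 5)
Your proposal is the original De Philippis--Figalli scheme: first a pointwise bound $|e(u)(x)|^2 \lesssim \delta(x)^{-1}$ with $\delta := \mathrm{dist}(\cdot,\Sigma)$, then integration using the small Minkowski content of tubular neighborhoods of $\Sigma$ extracted from porosity. The paper proceeds differently and invokes the abstract higher-integrability Lemma~\ref{lem_technical} from \cite{LM}, whose proof is a stopping-time covering of $B_R$ by good balls (where $K$ is disjoint or $C^{1,\alpha}$-regular) and bad balls to be subdivided, driven at every scale by Proposition~\ref{porosity}; assumptions (i)--(iii) are checked and the theorem drops out. Both routes descend from \cite{DPF}, but the paper's version never introduces a global singular set $\Sigma$ nor the distance function. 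That difference is not cosmetic: it sidesteps exactly the point where your argument has a gap.

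The gap is in Case 2 of your pointwise bound. From $\mathrm{dist}(y,\Sigma) \geq \delta(x)/2$ you assert that $K\cap B(y,\delta(x)/2)$ is a $C^{1,\alpha}$-regular arc, so that the boundary estimate of Lemma~\ref{lem_lame_estimate}, applied scale-invariantly on $B(y,\delta(x)/2)$, yields $|e(u)(x)|^2\leq C/\delta(x)$. But Lemma~\ref{lem_lame_estimate} requires the \emph{quantitative} regularity of the interface: $K$ must be a graph over a chord through $y$ with $(\delta(x)/2)^\alpha\,[\nabla f]_\alpha$ bounded by a constant depending only on $\mathbf{A}$. Distance from $\Sigma$ does not give this on its own. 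The corollary following Proposition~\ref{porosity} only says that $K\setminus\Sigma$ is \emph{locally} a $C^{1,\alpha}$ curve --- at some scale, possibly far smaller than $\delta(x)$, and with no uniform control of the H\"older seminorm --- and Proposition~\ref{porosity} produces a good ball \emph{somewhere} inside $B(x_0,r/2)$, not centered at a prescribed $y$. What is missing is a propagation step of the form: if $K\cap B(y,\delta)\cap\Sigma=\emptyset$ then $K$ is $C^{1,\alpha}$-regular in $B(y,c\delta)$ with constants depending only on $\mathbf{A}$. Such a statement can be extracted from the decay $\beta(z,tr)\leq Ct^\alpha$ underlying Theorem~\ref{BILth} together with Lemma~\ref{topological1}, but it must be argued; with a vague $\Sigma$ it is simply not there. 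Until that step is filled in, the pointwise bound does not hold as written, and hence neither does the rest of the proof. Replacing it by an application of Lemma~\ref{lem_technical}, with porosity as input, is exactly how the paper closes the loop without ever having to make $\Sigma$ quantitative.
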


We rely on a higher integrability lemma (\cite[Lemma 4.2]{LM}) which is inspired by the technique of~\cite{DPF}. 
We recall that given $0 < \alpha < 1$, a closed set $K$, $x_0 \in K$ and $r > 0$, we say that $K$ is $C^{1,\alpha}$-regular in the ball $B(x_0,r)$ if it is the graph of a $C^{1,\alpha}$ function $f$ such that, in a convenient coordinate system it holds $f(0) = x_0$, $f'(0) = 0$ and $r^\alpha \| f' \|_{C^\alpha} \leq 1/16$. We take the convention that the $C^{1,\alpha}$ norm is small enough because we don't want it to interfere with the boundary gradient estimates for the Lamé's equations. It is also required by the covering lemma \cite[Lemma 4.3]{LM} on which \cite[Lemma 4.2]{LM} is based.

\begin{lemma}\label{lem_technical}
    We fix a radius $R > 0$.
    Let $K$ be a closed subset of $B(0,R) \subset \R^2$ and $v \colon B(0,R) \to \R^+$ be a non-negative Borel function.
    We assume that there exists $C_0 \geq 1$ and $0 < \alpha \leq 1$ such that the following holds true.
    \begin{enumerate}[label = (\roman*)]
        \item For each ball $B(x,r) \subset B(0,R)$ with $x \in K$,
            \begin{equation*}
                C_0^{-1} r \leq \HH^{1}(K \cap B(x,r)) \leq C_0 r.
            \end{equation*}

        \item For each ball $B(x,r) \subset B(0,R)$ with $x \in K$, there exists a smaller ball $B(y,C_0^{-1} r) \subset B(x,r)$ with $y \in K$ in which $K$ is $C^{1,\alpha}$-regular.

        \item For each ball $B(x,r) \subset B(0,R)$ such that either $K$ is disjoint from $B(x,r)$ or such that $x \in K$ and $K$ is $C^{1,\alpha}$-regular in $B(x,r)$, we have
            \begin{equation*}
                \sup_{B(x,r/2)} v(x) \leq C_0 \left(\frac{R}{r}\right).
            \end{equation*}
    \end{enumerate}
    Then there exists $p > 1$ and $C \geq 1$ (depending on $C_0$) such that
    \begin{equation*}
        \fint_{B(0,R/2)} \! v^p \leq C.
    \end{equation*}
\end{lemma}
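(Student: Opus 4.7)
The plan is to reduce the $L^p$ bound on $v$ to a distance-to-singular-set estimate: extract a subset $S \subset K$ of codimension strictly greater than one (using the porosity hypothesis (ii)) and show pointwise that $v \le C R/d(\cdot, S)$.

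\emph{Step 1 (singular set of $K$).} Let $S \subset K$ be the set of $y \in K$ admitting \emph{no} $C^{1,\alpha}$-regular ball $B(y,r) \subset B_R$. By (ii), every ball $B(y,r)$ with $y \in K$ contains a $C^{1,\alpha}$-regular sub-ball $B(z, C_0^{-1} r)$ whose center $z \in K$ is automatically in $K \setminus S$. Combined with the $(n-1)$-Ahlfors bound (i), this yields a porosity of $S$ inside $(K, \HH^{n-1})$:
\begin{equation*}
\HH^{n-1}(S \cap B(y,r)) \le (1 - c_0)\,\HH^{n-1}(K \cap B(y,r)), \qquad y \in K,\ r > 0.
\end{equation*}
A standard Mattila-type dyadic iteration inside an Ahlfors regular set then produces some $\gamma = \gamma(n, C_0) > 0$ with, writing $T_s(S)$ for the open $s$-neighborhood of $S$,
\begin{equation*}
|T_s(S) \cap B_R| \le C R^{n-1-\gamma} s^{1+\gamma}, \qquad s \in (0, R).
\end{equation*}

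\emph{Step 2 (pointwise bound).} For $x \in \tfrac12 B_R$ set $d := d(x, S) \wedge R$. If $d(x, K) \ge d/2$, the ball $B(x, d/2)$ is disjoint from $K$ and (iii) gives $v(x) \le 2 C_0 R/d$. Otherwise, let $y \in K$ be a closest point to $x$; then $d(y, S) \ge d/2$, so applying (ii) to $B(y, d/4)$ yields a $C^{1,\alpha}$-regular sub-ball of radius $\sim d$ whose inner half contains $x$ (after matching constants), and (iii) again gives $v(x) \le CR/d$.

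\emph{Step 3 (integration).} By Step 2 together with the layer-cake formula and the tube bound from Step 1, for any $1 < p < 1 + \gamma$,
\begin{equation*}
\int_{\tfrac12 B_R}\! v^p \,dx \le C R^p \int_0^R \! s^{-p-1} |T_s(S) \cap B_R|\,ds + C R^n \le C R^{n+p-1-\gamma}\!\int_0^R \! s^{\gamma - p}\,ds + C R^n \le C R^n.
\end{equation*}
Dividing by $|\tfrac12 B_R|$ gives $\fint_{\tfrac12 B_R} v^p \le C$, completing the proof.

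\emph{Main obstacle.} The delicate point is Step~2: condition (ii) only places the regular sub-ball somewhere inside $B(y, d/4)$, and one must verify that $x$ actually falls in its inner half. This is precisely the role of the covering lemma \cite[Lemma~4.3]{LM} referenced in the statement, which builds a Whitney-like cover of $B_R$ by good balls whose radii are comparable to the distance from their center to the iterated singular part of $K$, so that~(iii) can be applied uniformly. Once this is in hand, Step~1 reduces to a standard Mattila porosity argument within the Ahlfors-regular $K$, and Step~3 is a direct computation.
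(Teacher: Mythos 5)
Your reduction to a single ``singular set'' $S$ does not work: the pointwise bound $v(x)\le C R/d(x,S)$ of Step~2 is not a consequence of hypotheses (ii)--(iii). Knowing $d(y,S)\ge d/2$ only tells you that every point of $K\cap B(y,d/4)$ admits \emph{some} $C^{1,\alpha}$-regular ball centered at it, of completely unspecified (possibly tiny) radius, and hypothesis (ii) applied to $B(y,d/4)$ produces a regular sub-ball $B(z,C_0^{-1}d/4)$ located \emph{somewhere} in $B(y,d/4)$, with no reason for $x$, or even $y$, to lie in its inner half. Nothing in (i)--(iii) forbids $v$ from being of order $R/\rho_0$ at points lying at distance $\rho_0\ll d(x,S)$ from a portion of $K$ that is regular only at the small scale $\rho_0$ while being far from $S$; so the inequality $v\lesssim R/d(\cdot,S)$ can simply fail. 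The obstacle you flag at the end is therefore not a technicality that the covering lemma repairs inside your scheme --- it forces a different scheme. (A secondary point: the ``Mattila-type iteration'' in Step~1 needs the geometric, hole-based porosity coming from (ii), not merely the single-scale measure inequality you state, since $S$ itself need not be Ahlfors regular; you do have the holes, so this part is only loosely written.)

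What does work --- and is what the paper does, since it gives no self-contained proof but invokes \cite[Lemma 4.2]{LM}, following the De Philippis--Figalli technique \cite{DPF}, whose engine is the covering lemma \cite[Lemma 4.3]{LM} --- is a scale-dependent version of your Steps~1--3. For each scale $s$ one considers the bad set $K_s$ of points of $K$ that are not contained in the inner part of a ball of radius comparable to $s$ in which $K$ is $C^{1,\alpha}$-regular; by (iii), the superlevel set $\{v>CR/s\}\cap\tfrac12 B_R$ is contained in the $s$-neighborhood of $K_s$ (apart from the trivial region at distance $\ge s/2$ from $K$), and the heart of the matter is the uniform tube bound $\bigl|\{x: d(x,K_s)<s\}\cap B_R\bigr|\le C R^{\,n-1-\gamma}s^{\,1+\gamma}$. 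This is proved by iterating (ii) dyadically from scale $R$ down to $s$: regularity in a ball is inherited by smaller balls centered on $K$ inside it (the constraint $r^\alpha\|f'\|_{C^\alpha}\le 1/4$ only improves as $r$ decreases), so at each generation a fixed proportion of the children --- those sitting inside the regular sub-ball furnished by (ii) --- is discarded, while (i) bounds the number of children per parent; this yields covering numbers of $K_s$ at scale $s$ of order $(R/s)^{n-1-\gamma}$ and hence the tube bound. With $K_s$ in place of $S$, your Step~3 layer-cake computation then goes through verbatim. In short, the fixed set $S$ must be replaced by the scale-dependent family $K_s$ (a stopping-time decomposition across scales); that replacement is precisely the content of the covering lemma you hoped to use only as a patch, and it cannot be bypassed by the distance-to-$S$ estimate.
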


\begin{proof}[Proof of Theorem~\ref{thm_integrability}]
    We apply the Lemma \ref{lem_technical}.
    More precisely, for all $x \in \Omega$ and all $R > 0$ such that $B(x,R) \subset \Omega$, one can applies Lemma~\cite[Lemma 4.2]{LM} in the ball $B(x,R)$ to the function $v := R \abs{e(u)}^2$.
    The assumption (i) follows from the local Ahlfors-regularity of $K$.
    The assumption (ii) follows from the porosity (Proposition~\ref{porosity}).
    The assumption (iii) follows from interior/boundary gradient estimates for the Lame's equations and from the local Ahlfors-regularity.
    In particular, the boundary estimate is detailed in Lemma~\ref{lem_lame_estimate} in Appendix~\ref{appendix_lame}.
\end{proof}


\begin{appendices}

    \section{Lamé's equations}\label{appendix_lame}

    We work in the Euclidean space $\R^n$ ($n \geq 2$).
    For $r > 0$, $B_r$ denotes the ball of radius $r$ and centered at $0$.
    We fix a radius $0 < R \leq 1$, an exponent $0 < \alpha \leq 1$, a constant $A > 0$ and a $C^{1,\alpha}$ function $f\colon \R^{n-1} \cap B_R \to \R$ such that $f(0) = 0$, $\nabla f(0) = 0$ and $R^\alpha \left[\nabla f\right]_\alpha \leq A$.
    We introduce
    \begin{align*}
        V_R      & := \set{x \in B_R | x_n > f(x')} \\
        \Gamma_R & := \set{x \in B_R | x_n = f(x')}.
    \end{align*}
    We denote by $\nu$ the normal vector field to $\Gamma_R$ going upward.
    For $0 < t \leq 1$, we write $t V_R$ for $V_R \cap B_t$ and $t \Gamma_R$ for $\Gamma_R \cap B_t$.
    For $u \in W^{1,2}(V_R;\R^n)$, we denote by $u^*$ the trace of $u$ in $L^2(\partial V_R;\R^n)$.
    For a function $\xi \colon V_R \to \R^{n \times n}$, we define (formally) $\mathrm{div}(\xi)$ as the vector field whose $i$th coordinate is given by $\mathrm{div}(\xi)_i = \sum_j \partial_j \xi_{ij}$.
    We also recall the notation for the linear strain tensor
    \begin{equation*}
        e(u) = \frac{Du + Du^T}{2}.
    \end{equation*}
    and the stress tensor
    \begin{equation*}
        \mathbf{A}e(u) = \lambda \mathrm{div}(u) I_n + 2 \mu e(u),
    \end{equation*}
    where $\lambda$ and $\mu$ are the Lamé coefficients satisfying $\mu > 0$ and $\lambda + \mu > 0$.
    We denote by $W^{1,2}_0(V_R \cup \Gamma_R;\R^n)$ the space of functions $v \in W^{1,2}(V_R;\R^n)$ such that $v^* = 0$ on $\partial V_R \setminus \Gamma_R$.

    Our object of study are the functions $u \in W^{1,2}(V_R) \cap L^\infty(V_R)$ which are weak solutions of
    \begin{equation}\label{eq_lame_problem}
        \left\{
            \begin{array}{lcl}
                \mathrm{div}(\mathbf{A}e(u)) & = & 0 \quad \text{in $V_R$} \\
                \mathbf{A}e(u) \cdot \nu     & = & 0 \quad \text{on $\Gamma_R$},
            \end{array}
        \right.
    \end{equation}
    that is for all $v \in W^{1,2}_0(V_R \cup \Gamma_R;\R^n)$,
    \begin{equation*}
        \int_{V_R} \! \mathbf{A}e(u) : Dv \dd{x} = 0.
    \end{equation*}

    \begin{remark}
        As
        \begin{equation*}
            \mathbf{A}e(u) = (\lambda + \mu) Du^T + \mu \! Du + \lambda (\mathrm{div}(u) I_n - Du^T)
        \end{equation*}
        and the part $\mathrm{div}(u) I_n -  Du^T$ is divergence free, we can also write formally
        \begin{equation*}
            \mathrm{div}(\mathbf{A}e(u)) = (\lambda + \mu) \nabla \mathrm{div}(Du) + \mu \Delta u.
        \end{equation*}
    \end{remark}

    We are going to justify the following estimate.

    \begin{lemma}\label{lem_lame_estimate}
        Let us assume $n=2$.
        There exists $C \geq 1$ (depending on $\alpha$, $A$, $\lambda$, $\mu$) such that
        \begin{equation*}
            \sup_{\tfrac{1}{2} V_R} \abs{e(u)} \leq C \left(\fint_{V_R} \! \abs{e(u)}^2 \dd{x}\right)^{\frac{1}{2}}.
        \end{equation*}
    \end{lemma}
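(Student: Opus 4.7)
The plan is to combine rescaling, subtraction of an infinitesimal rigid motion via Korn's inequality, flattening of the boundary $\Gamma_R$, and boundary Schauder-type estimates for elliptic systems with H\"older-continuous coefficients and homogeneous conormal boundary data.

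First, by the dilation $\tilde u(y):=u(Ry)$ for $y\in B_1$, I would reduce to $R=1$. The rescaled graph $f_R(y'):=R^{-1}f(Ry')$ still satisfies $f_R(0)=0$, $\nabla f_R(0)=0$ and $[\nabla f_R]_\alpha\le A$, the system \eqref{eq_lame_problem} is preserved, and both sides of the target inequality scale identically. Next, since $e(u+r)=e(u)$ for every infinitesimal rigid motion $r(x)=a+Bx$ with $B$ skew-symmetric and the system is invariant under such additions, I can subtract a suitable $r$ so that Korn's second inequality on the Lipschitz domain $V_1$ (whose Lipschitz constant is controlled by $A$) gives
\[
\|u\|_{W^{1,2}(V_1)}\le C_{\mathrm{Korn}}\|e(u)\|_{L^2(V_1)}.
\]

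Next I would flatten the boundary via $\Phi(y',y_n):=(y',y_n+f(y'))$, which sends $\tilde V:=\Phi^{-1}(V_1)$ onto $V_1$ and its flat piece $\tilde\Gamma$ onto $\Gamma_1$. The pulled-back field $\tilde u:=u\circ\Phi$ then solves a second-order elliptic system in divergence form with zero conormal boundary data,
\[
\mathrm{div}\bigl(\tilde{\mathbf A}(y)\,D\tilde u\bigr)=0\ \text{in }\tilde V,\qquad \tilde{\mathbf A}(y)\,D\tilde u\cdot e_n=0\ \text{on }\tilde\Gamma,
\]
where $\tilde{\mathbf A}(y)$ is a polynomial expression in $\nabla f(y')$, of class $C^{0,\alpha}$ with norm controlled by $A$, and which is Legendre--Hadamard elliptic for $A$ small enough (thanks to $\mu>0$, $\lambda+\mu>0$, together with $\nabla f(0)=0$ and $\|\nabla f\|_\infty\lesssim A\le 1/4$). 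Applying a boundary Schauder/Campanato estimate for such systems with zero conormal Neumann condition (Agmon--Douglis--Nirenberg theory, or equivalently a direct Campanato iteration starting from the energy bound on $\|\tilde u\|_{L^2}$) yields
\[
\sup_{\tfrac12\tilde V}|D\tilde u|\le C\,\|\tilde u\|_{L^2(\tilde V)},
\]
with $C=C(\alpha,A,\lambda,\mu)$. Pulling back through the bi-Lipschitz map $\Phi$ and using $|e(u)|\le |Du|\lesssim |D\tilde u|$, together with the Korn bound, produces
\[
\sup_{\tfrac12 V_1}|e(u)|\le C\,\|u\|_{L^2(V_1)}\le C'\,\|e(u)\|_{L^2(V_1)},
\]
and undoing the rescaling gives the stated estimate.

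The main obstacle is the boundary Schauder step. Unlike the scalar Laplace case, a clean off-the-shelf statement for the vectorial Lam\'e operator with traction-free boundary condition on a $C^{1,\alpha}$ graph is not easy to locate in the literature, which is precisely why the authors have devoted Appendix~\ref{appendix_lame} to it. One must be careful that Legendre--Hadamard ellipticity (rather than the stronger Legendre condition) suffices, and one has to control the lower-order boundary contributions produced by flattening. The restriction $n=2$ most likely enters only here, either through planar-specific regularity for elasticity systems (e.g.\ via Airy-type potentials) or through the stronger Korn-type inequalities available in two dimensions.
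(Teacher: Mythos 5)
Your approach --- Korn's inequality to reduce $e(u)$ to $Du$, flattening $\Gamma_R$ with a $C^{1,\alpha}$ diffeomorphism, then boundary Schauder/Campanato regularity for the resulting conormal Neumann system with H\"older coefficients, and pulling back --- is essentially the paper's own proof; the paper's specific realization of the boundary step is to symmetrize the flattened Neumann problem by even reflection across the flat boundary (following Friedrich--Fonseca--Leoni--M\"uller-type arguments) so that the interior Campanato iteration of Ambrosio--Fusco--Pallara, Theorem~7.53, applies directly, which is precisely how one handles the lower-order boundary contributions you flag as the main obstacle. One peripheral correction: the restriction $n=2$ is not exploited through Airy potentials or planar Korn inequalities --- the reflection-plus-Campanato argument is dimension-free --- rather $n=2$ is assumed simply because it is the ambient setting of the paper.
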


    \begin{proof}
        It suffices to prove that for all solution of \ref{eq_lame_problem}, we have
        \begin{equation}\label{eq_lame_estimate0}
            \sup_{\tfrac{1}{2}V_R} \abs{Du} \leq C \left(\fint_{V_R} \! \abs{Du}^2 \dd{x}\right)^{\frac{1}{2}}.
        \end{equation}
        Indeed, we observe first that $\abs{e(u)} \leq \abs{Du}$ so (\ref{eq_lame_estimate0}) implies
        \begin{equation}\label{eq_lame_estimate1}
            \sup_{\tfrac{1}{2} V_R} \abs{e(u)} \leq C \left(\fint_{V_R} \! \abs{Du}^2 \dd{x}\right)^{\frac{1}{2}}.
        \end{equation}
        By Korn inequality, there exists a skew-symmetric matrix $R$ such that
        \begin{equation*}
            \int_{V_R} \! \abs{Du - R}^2 \dd{x} \leq \int_{V_R} \! \abs{e(u)}^2 \dd{x}
        \end{equation*}
        so it is left to apply (\ref{eq_lame_estimate1}) to $x \mapsto u(x) - Rx$, which also solves Lamé's equations.

        From now on, we deal with (\ref{eq_lame_estimate0}).
        The letter $C$ plays the role of a constant $\geq 1$ that depends on $\lambda$, $\mu$ and $\alpha$, $A$.
        We refer to the proof \cite[Theorem~3.18]{FFLM} which itself refers to the proof of \cite[Theorem~7.53]{Ambrosio_BV}.
        We straighten the boundary $\Gamma_R$ via the $C^{1,\alpha}$ diffeomorphism $\phi \colon x \mapsto x' + (x_n - f(x')) e_n$.
        We observe that $\phi(\overline{V_R})$ contains a half-ball ball $B^+ = \overline{B}(0,C_0^{-1} R)^+$, where $C_0 \geq 1$ is a constant that depends on $\lambda$, $\mu$, $\alpha$.
        The Neumann problem satisfied by $u$ in $V_R$ is transformed into a Neumann problem satisfied by a function $v$ in $\overline{B}(0, C_0^{-1} R)^+$.
        Then we symmetrize the elliptic system to the whole ball $B = B(0,C_0^{-1} R)$ as in \cite[Theorem~3.18]{FFLM}.
        Following the proof of \cite[Theorem~7.53]{Ambrosio_BV} (in the special case where the right-hand side $h$ is zero), we arrive to the fact there exists $q > n = 2$ (depending on $\lambda$, $\mu$, $\alpha$) such that for all $x_0 \in \tfrac{1}{2} B$ and $0 < \rho \leq r \leq C^{-1} R$
        \begin{multline*}
            \int_{B_\rho(x_0)} \! \abs{\nabla v - (\nabla v)_{x_0,\rho}}^2 \dd{x} \leq C \left(\frac{\rho}{r}\right)^q \int_{B_r(x_0)} \! \abs{\nabla v - (\nabla v)_{x_0,r}}^2 \dd{x} \\ + C \rho^q \int_B \! \abs{\nabla v}^2 \dd{x}.
        \end{multline*}
        In particular by Poincaré-Sobolev inequality,
        \begin{equation*}
            \int_{B_\rho(x_0)} \! \abs{\nabla v - (\nabla v)_{x_0,\rho}}^2 \dd{x} \leq C \left(\frac{\rho}{r}\right)^q \int_B \! \abs{\nabla v}^2 \dd{x}.
        \end{equation*}
        According to the Campanato characterisation of Hölder spaces,
        \begin{equation*}
            \left[\nabla v\right]_{C^{0,\sigma}(\tfrac{1}{2}B)} \leq C \left(R^{-(2+2\sigma)} \int_B \! \abs{\nabla v}^2 \dd{x}\right)^\frac{1}{2},
        \end{equation*}
        where $\sigma = \frac{q - 2}{2}$, and this implies
        \begin{equation*}
            \sup_{\tfrac{1}{2}B} \abs{\nabla v} \leq C \left(\fint_B \! \abs{\nabla v}^2 \dd{x}\right)^2.
        \end{equation*}
        This property is inherited by $u$ via the diffeomorphism $\phi$,
        \begin{equation*}
            \sup_{C^{-1} V_R} \abs{\nabla u} \leq C \left(\fint_{V_R} \! \abs{\nabla u}^2 \dd{x}\right)^2.
        \end{equation*}
        We can finally bound the supremum of $\abs{\nabla u}$ on $\tfrac{1}{2}V_R$ by a covering argument.
    \end{proof}

\end{appendices}

\section*{Acknowledgements}

This work was co-funded by the European Regional Development Fund and
the Republic of Cyprus through the Research and Innovation Foundation
(Project: EXCELLENCE/1216/0025). The authors both have support from the
CNR-GDR CALVA founds and ANR SHAPO. A part of this work was done while Camille
Labourie visited Antoine Lemenant at the university of Lorraine (Nancy,
France) in march 2021. This was just the day before the french third
confinement time.

\bibliographystyle{plain}
\bibliography{biblio_griffith}
\end{document}